\newif\ifarxiv
\newcommand\ray{\vec r}
\newcommand\CC{\mathrm{CC}}
\newcommand\Cone{\mathrm{Cone}}
\newcommand\NI{^{\mathrm{D}}} 
\newcommand\chosensides{Q_2'}
\newcommand\observationqed{\qed}
\theoremstyle{plain}\newtheorem{conjecture}
{Conjecture}
\newtheorem{observation}[theorem]{Observation}
\title{An Almost Optimal Bound on the Number of Intersections of
  Two Simple Polygons}
\titlerunning{The Number of Intersections Between Two Simple Polygons}
\author{Eyal Ackerman}
{Department of Mathematics, Physics, and Computer Science, 
University of Haifa at Oranim, Tivon 36006, Israel}
{ackerman@sci.haifa.ac.il}
{}
{The main part of this work was performed during a visit to Freie Universit\"at Berlin which was supported by the Freie Universit\"at Alumni Program.}
\author{Bal\'azs Keszegh}
{Alfr\'ed R\'enyi Institute of Mathematics,
H-1053 Budapest, Hungary \and MTA-ELTE Lend\"ulet Combinatorial Geometry Research Group}
{keszegh@renyi.hu}
{https://orcid.org/0000-0002-3839-5103}
{Research supported by the Lend\"ulet program of the Hungarian Academy of Sciences (MTA), under the grant LP2017-19/2017 and by the National Research, Development and Innovation Office -- NKFIH under the grant K 116769.}
\author{G\"unter Rote}
{Department of Computer Science,
Freie Universit\"at Berlin, Takustr.\ 9, 14195 Berlin, Germany}
{rote@inf.fu-berlin.de}
{https://orcid.org/0000-0002-0351-5945}
{}
\authorrunning{E. Ackerman, B. Keszegh, and G. Rote}
\keywords{Simple polygon,
Ramsey theory,
combinatorial geometry}
\begin{document} 


\maketitle
\begin{abstract}

What is the maximum number of intersections of the boundaries
of a simple $m$-gon and a simple $n$-gon,
%
assuming general position?
This is a basic question in combinatorial geometry, and the answer is
easy if at least one of $m$ and $n$ is even:
If both $m$ and $n$ are even, then every pair of sides may cross and
so the answer is $mn$.
If exactly one polygon, say the $n$-gon, 
has an odd number 
of sides, it can intersect each side of the
$m$-gon 
at most $n-1$ times%
; hence there are at most $mn-m$ intersections. It
is not hard to construct examples that meet these bounds.
If both $m$ and $n$ are odd,
the best known construction has $mn-(m+n)+3$ intersections, and it is
conjectured that this is the maximum.  However, the best known upper
bound is only $mn-(m + \lceil \frac{n}{6} \rceil)$, for $m \ge n$.  We
prove a new upper bound of $mn-(m+n)+C
$ for some constant $C$, which is optimal apart from the value of~$C$.
\end{abstract}

\section{Introduction}

To determine the union of two or more geometric
objects in the plane is one of the basic computational geometric problems. In strong relation to that, determining the maximum complexity of the union of two or more geometric objects is a basic extremal geometric problem. We study this problem when the two objects are simple polygons.

Let $P$ and $Q$ be two simple polygons with $m$ and $n$ sides,
respectively,
where $m, n\ge 3$. 
For simplicity we always assume general
position in the sense that no three vertices (of $P$ and $Q$ combined)
lie on a line and no two sides
 (of $P$ and $Q$ combined) are parallel.
We are interested in the maximum number of intersections of the
boundaries of $P$ and $Q$.


This naturally gives an upper bound for the
complexity of the union of the polygon areas as well. (In the worst case all the $m+n$ vertices of the two polygons contribute to the complexity of the boundary in addition to the intersection points.)

This problem was first studied in 1993 by
Dillencourt, Mount, and Saalfeld~\cite{DMS93}. The cases when $m$ or
$n$ is even are solved there. If $m$ and $n$ are both even, then every
pair of sides may cross and so the answer is $mn$.
 Figure \ref{fig:odd-odd-example}a shows one of many ways to achieve
 this number.
If one polygon, say $Q$,
has an odd number $n$
of sides, no line segment~$s$ can be intersected 
$n$~times by $Q$, because otherwise each side of $Q$ would have to
flip from one side of $s$ to the other side.
Thus, each side of the
$m$-gon $P$ is intersected
at most $n-1$ times, for a total
of at most $mn-m$ intersections.
%
It is easy to see that this bound is tight when $P$ has an
even number of sides, see
 Figure \ref{fig:odd-odd-example}b.

When both $m$ and $n$ are odd, the situation is more difficult; the bound that is obtained by the
above argument remains at
$mn-\max\{m,n\}$, 
because the set of $m$ intersections that are necessarily ``missing'' due to
the odd parity of $n$ might conceivably overlap with
the $n$ intersections that are ``missing'' due to
the odd parity of $m$.
However, the best known family of examples gives only 
$mn-(m+n)+3
=(m-1)(n-1)+2$ intersection points, see Figure~\ref{fig:odd-odd-example}c.
Note that in Figure \ref{fig:odd-odd-example},
all vertices of the polygons contribute to the boundary of the
union of the polygon areas.

\begin{conjecture}\label{conj:main}
Let $P$ and $Q$ be simple polygons with $m$ and $n$ sides, respectively, such that $m,n \ge 3$ are odd numbers.
Then there are at most $mn-(m+n)+3$ intersection points between sides of $P$ and sides of $Q$.
\end{conjecture}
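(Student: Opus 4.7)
The plan is to recast the conjecture in terms of the \emph{deficiency}
\[
\delta(p) := (n-1) - |p \cap \partial Q| \ge 0
\]
of each side $p$ of $P$, and symmetrically $\delta(q)$ for each side $q$ of $Q$. Counting non-intersecting side-pairs in two ways gives the identity
\[
mn - I \;=\; m + \sum_{p\in P}\delta(p) \;=\; n + \sum_{q\in Q}\delta(q),
\]
where $I$ is the total number of intersection points. Hence the conjecture is equivalent to $\sum_{p}\delta(p)\ge n-3$, or symmetrically $\sum_{q}\delta(q)\ge m-3$; the two statements differ by the constant $n-m$ and so are interchangeable. The task is thus to produce a total deficiency of at least $n-3$ on the sides of $P$.

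First I would work out the structure of a \emph{saturated} side, i.e.\ a side $p$ of $P$ with $\delta(p)=0$. The parity argument that establishes the bound $n-1$ is tight in this case, which forces a very rigid picture: traced cyclically along $\partial Q$, the $n$ sides alternate between the two open halfplanes of the line $\ell(p)$ through $p$, with exactly one distinguished ``gap'' side $q^*(p)$ whose two endpoints lie on the same halfplane, while each of the remaining $n-1$ sides of $Q$ crosses $p$ itself. Thus every saturated side $p$ distinguishes a side $q^*(p)\in Q$ and witnesses one missing pair $(p,q^*(p))$, which simultaneously contributes to $\delta(q^*(p))$.

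The main step is to convert this local rigidity into the global lower bound $\sum_p \delta(p) \ge n - O(1)$. I would try to show that if many sides $p_1,\dots,p_k$ of $P$ are saturated then either many of them share the same gap side $q^*$ (so $\delta(q^*)$ is large, and via the identity above so is $\sum_p\delta(p)$), or the sequence of distinct gap sides, together with the forced alternation of $\partial Q$ around each $\ell(p_i)$, contradicts the simplicity or closedness of $P$. The hard part, and the reason the paper lists Ramsey theory among its keywords, is that an adversarial arrangement can distribute saturated sides and their gaps in intricate ways; the natural tool is a Ramsey-type extraction that isolates a long monotone subsequence of saturated sides whose gap sides are canonically ordered along $\partial Q$, reducing the problem to a model configuration on which a rotation-number or Euler-characteristic computation can directly produce the forced deficiency.

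Carrying this out should yield $\sum_p \delta(p) \ge n - C$ for some absolute constant $C$, and hence the bound $I \le mn - (m+n) + C$ promised in the abstract. Sharpening $C$ all the way to $3$, matching the extremal construction of Figure~\ref{fig:odd-odd-example}c, would demand a delicate case analysis of the few small exceptional configurations that survive the Ramsey extraction; I expect this last step to be the genuinely hard part, and it is presumably why the tight constant is still only conjectured.
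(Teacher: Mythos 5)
First, a category error: the statement you were asked to prove is Conjecture~\ref{conj:main}, and the paper does \emph{not} prove it---it proves only Theorem~\ref{thm:main}, with $3$ replaced by an astronomically large constant $C$. Your own sketch is aimed at the weaker theorem, and you acknowledge this in your final sentence, but that should be said up front rather than discovered at the end.

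Your algebraic reformulation is sound---$\sum_p\delta(p)=|E\NI|-m$, so the conjecture says $|E\NI|\ge m+n-3$, which is exactly the content of Observation~\ref{obs:CCs}---and your description of a saturated side $p$, having exactly one gap side $q^*(p)$ of $Q$ that avoids the line $\ell(p)$ altogether, is a correct single-side parity argument. But this is precisely where your plan parts ways with the paper, and where it has a gap. The paper's engine is Lemma~\ref{lem:1}, which applies the parity argument to a \emph{pair} of sides $p_a,p_b$ of $P$ jointly, via the closed-odd-walk Proposition~\ref{prop:domino} on the four-quadrant labeled multigraph $G_0$. That produces an ``associated pair'' of consecutive sides of $Q$ and the hooking/hooked dichotomy, and those are the objects to which the case analysis of Lemma~\ref{lem:2} and the Ramsey extractions of Section~\ref{sec:Ramsey} (tournaments, Erd\H{o}s--Szekeres caps and cups, pigeonhole on hooking versus hooked) are applied. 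Your ``main step''---that many distinct gap sides together with the forced alternation of $\partial Q$ around each $\ell(p_i)$ ``contradicts the simplicity or closedness of $P$''---has no analogue of this two-sided structure; it is stated as an aspiration, with no concrete lemma, no precise configuration to rule out, and no case analysis behind it. A secondary loss is that the deficiency bookkeeping forgets the single quantity the paper actually controls, namely the number of connected components of the disjointness graph $G\NI$: bounding that one integer, rather than reasoning directly about a sum over all $m$ sides, is what lets the Ramsey reductions terminate with a constant, and it is the target to which every step of Section~\ref{sec:Ramsey} is calibrated.
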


In \cite{DMS93} an unrecoverable error appears in a claimed proof of Conjecture~\ref{conj:main}.
Another attempted proof~\cite{FG12} also turned out to have a fault.
The
only correct improvement over the trivial upper bound is an upper bound of $mn-(m + \lceil
\frac{n}{6} \rceil)$ for $m \ge n$, due to
\v{C}ern\'y, K\'ara, Kr\'al', Podbrdsk\'y, Sot\'akov\'a,
and \v{S}\'amal~\cite{czechs}.
We will briefly discuss their proof in Section~\ref{overview}.

We improve the upper bound to $mn-(m+n)+O(1)$, which is optimal apart from an additional constant:

\begin{theorem}\label{thm:main}
There is an absolute constant $C$ such that the following holds.
Suppose that $P$ and $Q$ are simple polygons with $m$ and $n$ sides, respectively,
such that $m$ and $n$ are odd numbers.
Then there are at least $m+n-C$ pairs of a side of $P$ and a side of
$Q$ that do not intersect.
Hence, there are at most $mn-(m+n)+C$ intersections.
\end{theorem}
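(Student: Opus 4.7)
The plan is to analyze the bipartite ``non-crossing'' graph $G=(V_P,V_Q,E)$ whose vertex classes are the sides of $P$ and $Q$, with $(p,q)\in E$ iff $p$ and $q$ are disjoint. Writing $M=|E|$ for the number of missed pairs, the theorem is equivalent to $M\ge m+n-C$. The starting point is the parity argument behind the trivial bound: since $n$ is odd and $Q$ is a simple closed polygon, any segment crossed by every side of $Q$ would force the vertices of $Q$ to alternate strictly between the two sides of its supporting line, which is impossible because $Q$ closes up. Hence every $p\in V_P$ misses at least one $q\in V_Q$ and vice versa, so every vertex of $G$ has degree at least $1$ and $M\ge\max(m,n)$. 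The real content is to produce the remaining $\min(m,n)-C$ missed pairs.

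Next I would assume $M<m+n-C$ for a large constant $C$ and turn this into strong structural information about $G$. From $\sum_p(d_P(p)-1)=M-m$ and $\sum_q(d_Q(q)-1)=M-n$ (where $d_P,d_Q$ denote degrees in $G$), the number of sides of $P$ with $d_P\ge 2$ is at most $M-m$, and similarly for $Q$. So, up to a few \emph{heavy} sides that absorb many missing edges, the remaining sides are \emph{tight} (degree exactly $1$). I would first dispose of the heavy regime: if some $q^\star\in V_Q$ misses a linear fraction of $V_P$ (e.g.\ because $q^\star$ lies entirely inside or outside $P$), then $M$ already exceeds the parity baseline by $\Omega(m)$, which for suitable thresholds gives $M\ge m+n-C$ directly. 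After reducing to the tight regime, up to a bounded exceptional set $G$ becomes an approximate matching $p\mapsto q(p)$ between $P$-sides and $Q$-sides: each matched $p$ is crossed by every $Q$-side except $q(p)$, and symmetrically.

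The core of the proof is a Ramsey-type classification of matched pairs. For a matched pair $(p,q(p))$, the line through $p$ is met an even number of times by the closed curve $Q$, and at least $n-1$ of those meetings lie on the segment $p$; hence $Q$ crosses this line at least once outside $p$, which pins down $q(p)$ and its two $Q$-neighbors in a controlled position relative to $p$. A symmetric analysis holds from the $Q$-side. To each matched pair I assign a finite combinatorial \emph{type} recording, for example, which side of the line through $p$ contains $q(p)$, the inside/outside status of the endpoints of $p$ with respect to $Q$, coarse directional buckets for $p$ and for $q(p)$, and the relative cyclic order of $p,q(p)$ along $P$ and $Q$. Since there are only $O(1)$ types, pigeonhole yields an arbitrarily large monochromatic subfamily of matched pairs---this is the Ramsey ingredient foreshadowed by the paper's keywords.

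The final step is to derive a contradiction inside such a monochromatic family $S$ by invoking simplicity of both polygons. In a single type the $Q$-sides $\{q(p):p\in S\}$ all emanate from a common region of the plane, while each of them is required to cross every other matched $p'\in S\setminus\{p\}$; together with $Q$ being a simple closed curve, these constraints rigidly prescribe how the $q(p)$'s can be threaded along $Q$, and I expect them to force at least $|S|-O(1)$ additional non-crossings---either between $S$-sides and some $q(p')$, or between $S$-sides and other $Q$-sides---which would contradict the assumption $M<m+n-C$. The main obstacle is precisely this last step: turning ``simplicity of $Q$ causes conflict'' into a clean quantitative count of extra missed pairs inside a monochromatic class. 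Everything before it is essentially combinatorial bookkeeping; the geometric--topological extraction of extra non-crossings from a rigid Ramsey class is where I expect the real difficulty to lie.
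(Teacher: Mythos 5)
Your plan shares two surface features with the paper—the bipartite non-crossing (``disjointness'') graph and the use of Ramsey-type pigeonholing—but it is missing the structural ideas that make the argument close, and at least one of its intermediate claims does not hold.

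First, the ``approximate matching'' reduction is not sound. You write that from $\sum_p (d_P(p)-1)=M-m$ the number of $P$-sides with $d_P\ge 2$ is at most $M-m$, and conclude that ``up to a bounded exceptional set $G$ becomes an approximate matching.'' But under the hypothesis $M<m+n-C$, the bound $M-m$ can be as large as $n-C-1$, and symmetrically $M-n$ can be as large as $m-C-1$; these are not bounded, and when $m$ and $n$ are very different one of the two sides of the bipartition is almost entirely unconstrained. So the reduction to a near-matching fails, and with it the premise that each surviving $p$ has a well-defined unique $q(p)$ that your types are built on.

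Second, and more fundamentally, the accounting step is missing. The paper's route is: bound the number of \emph{connected components} of the disjointness graph by a constant; then $m+n$ vertices minus at most $C$ components gives at least $m+n-C$ edges (Observation~\ref{obs:CCs}). This reduction is what turns ``almost all pairs cross'' into a clean finite target. You instead try to count missed pairs directly via degrees, and then concede that the decisive step---turning a rigid monochromatic family into ``at least $|S|-O(1)$ additional non-crossings''---is exactly the part you cannot carry out (``I expect them to force\dots''). That last step is not a small gap; it is the whole proof. The paper's engine is Lemma~\ref{lem:1} (a refinement of the parity argument, obtained from the odd closed walk in the auxiliary multigraph $G_0$, producing an \emph{associated} pair of consecutive sides with a controlled intersection pattern) and Lemma~\ref{lem:2} (two pairs that are both hooking or both hooked cannot lie in four distinct components). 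These two lemmas, not the bare parity fact $M\ge\max(m,n)$, are what drive the contradiction inside the Erd\H{o}s--Szekeres/tournament-extracted structured family. Your type system (side of a line, inside/outside status, cyclic order) is not shown to capture the geometry that makes the case analysis terminate, whereas the paper's normalization to a 7-cup with a transitive stabbing order pins the picture down to a finite list of configurations that are explicitly dismissed. As written, your proposal is a plausible research plan but not a proof, and its two concrete claims---the near-matching reduction and the $|S|-O(1)$ extraction---both need to be replaced rather than merely filled in.
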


The value of the constant $C$ that we obtain in our proof is around $2^{2^{67}}\!$.
We did not 
make a large effort
to optimize this value, 
and
 obviously, 
there is ample space for improvement.

\begin{figure}
	\centering
	\includegraphics[scale=1]{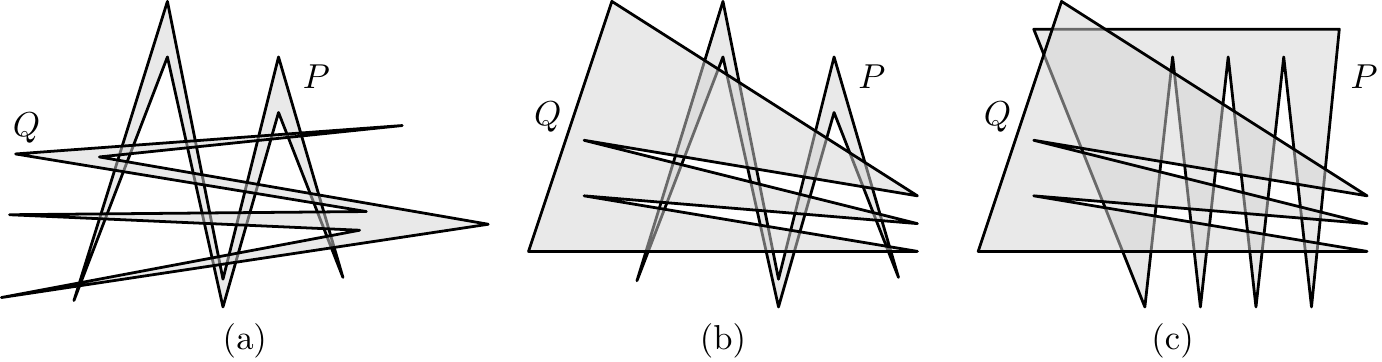}
	\caption{%
          (a)~Optimal construction for $m=n= 8$, with $8\times 8=64$ intersections.
          (b)~Optimal construction for $m= 8$, $n=7$, with $8\times 6=48$ intersections.
          (c) Lower-bound construction for $m=9$, $n=7$. There are
          $8\times 6 + 2 = 50$ intersections.
        }
	\label{fig:odd-odd-example}
\end{figure}

\section{Overview of the Proof}
\label{overview}

First we establish the crucial statement that the odd parity of $m$ and $n$
allows us to \emph{associate} to any two consecutive sides of one polygon
a pair of
consecutive sides of the other polygon
with a restricted intersection pattern among the four involved sides
(Lemma~\ref{lem:1} and
Figure~\ref{fig:hooked}). This is the only place where we use the odd
parity of the polygons.

A simple observation
(Observation~\ref{obs:CCs})
relates the bound on $C$ in Theorem~\ref{thm:main}
to the number of connected components of the bipartite ``disjointness graph''
between the polygon sides of $P$ and $Q$.
Our goal is therefore to show that there are few connected components.

We proceed to consider \emph{two}
pairs of associated pairs of sides (4 consecutive pairs with 8 sides in total).
Unless they form a special structure, they cannot belong to four
different connected components
(Lemma~\ref{lem:2}).
(Four is the maximum number of components that they could conceivably have.)
The proof involves a case distinction with a moderate amount of cases.
This structural statement allows us to reduce the bound on the number
of components by a constant factor, and thereby, we can already improve the
best previous result on the number of intersections
(Proposition~\ref{weak} in Section~\ref{sec:weaker}).

Finally, to get a constant bound on the number of
components,
our strategy is to use Ramsey-theoretic arguments
like the
Erd\H{o}s--Szekeres Theorem on caps and cups or the
pigeonhole principle
(see Section~\ref{sec:Ramsey})
in order to
 impose
additional structure on the configurations that we have to analyze.
This is the place in the argument where we give up control over the
constant~$C$ in exchange for useful properties that allow us to derive
a contradiction. 
This eventually boils down again to a moderate number of cases
(Section~\ref{sec:final-cases}).

By contrast, the proof
of the bound
 $mn-(m + \lceil
 \frac{n}{6} \rceil)$ for $m \ge n$
by
\v{C}ern\'y et al.\ 
 proceeds in a more local manner.
 The core of their argument
\cite[Lemma~3]{czechs}, which is proved by case distinction,
 is that it is impossible to have
 6 consecutive sides of one polygon together with
 6 distinct sides of the other polygon forming a perfect matching in the disjointness graph.
This statement is used to bound the number of components of the
  disjointness graph. (Lemma~\ref{cor:nCC} below uses a similar argument.)

\section{An Auxiliary Lemma on Closed Odd Walks}
\label{subsec:odd-walks}

We begin with the following seemingly unrelated claim concerning a specific small edge-labeled multigraph.
Let $G_0=(V_0,E_0)$ be the undirected multigraph
shown in Figure~\ref{fig:Gprime}.
It has four nodes $V_0=\{{\rm I},{\rm II},{\rm III},{\rm IV}\}$
and five edges $E_0=\{e_1=\{{\rm II},{\rm IV}\}, e_2=\{{\rm I},{\rm IV}\}, e_3=\{{\rm I},{\rm II}\}, e_4=\{{\rm I},{\rm III}\}, e_5=\{{\rm I},{\rm III}\}\}$.
 Every edge $e_i \in E_0$ has a label $L(e_i) \in \{a,b,*\}$ as follows:
$L(e_1)=*$, $L(e_2)=L(e_4)=a$, $L(e_3)=L(e_5)=b$.
\begin{figure}
  \begin{minipage}[t]{5.2cm}
    \centering
    \includegraphics[height=3cm]{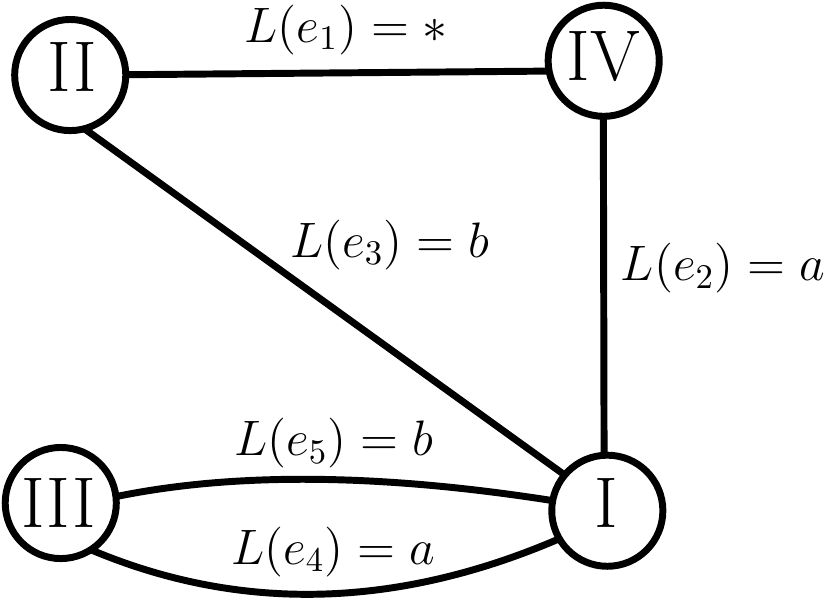}
    \caption{The edge-labeled multigraph $G_0$ in
     Proposition~\ref{prop:domino}.}
    \label{fig:Gprime}
  \end{minipage}
  \qquad
  \begin{minipage}[t]{8cm}
    \centering
    \includegraphics[height=3cm]{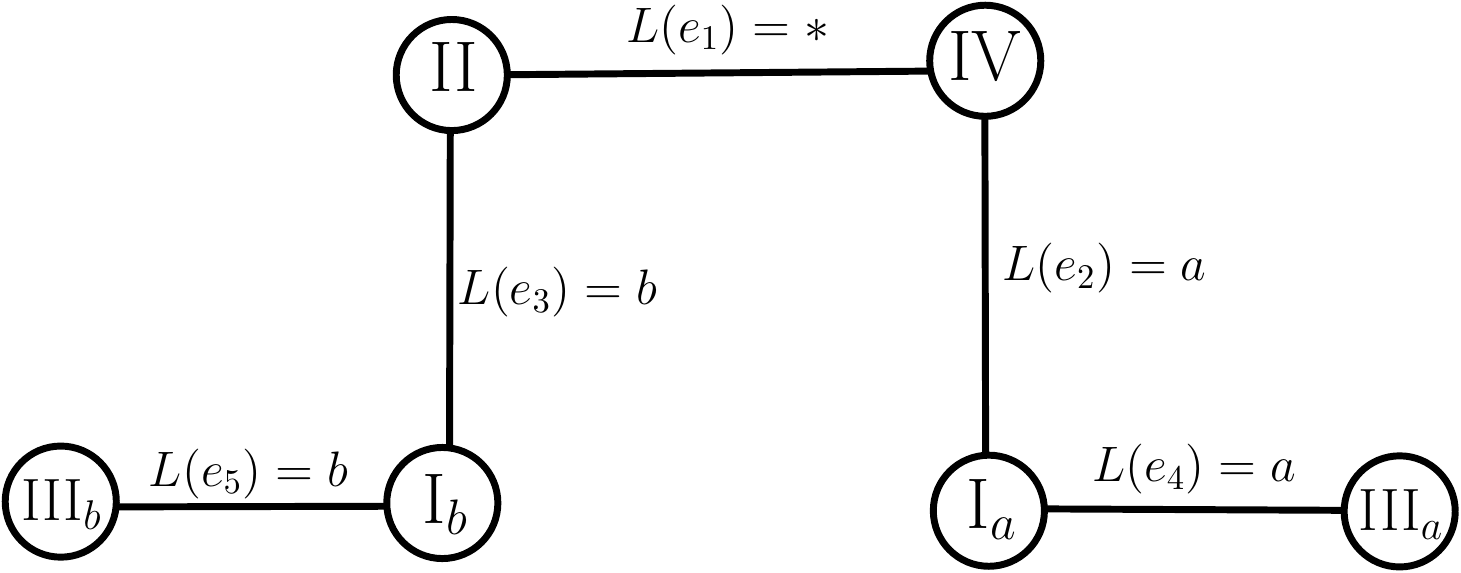}
    \caption{The unfolded graph $G_0'$}\label{fig:Gprimeprime}
  \end{minipage}
\end{figure}

\begin{proposition}\label{prop:domino}
If $W$ is a closed walk in $G_0$ of odd length,
then $W$ contains two cyclically consecutive edges of labels $a$ and $b$.
\end{proposition}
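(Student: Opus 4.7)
My plan relies on the ``unfolded'' multigraph $G_0'$ shown in Figure~\ref{fig:Gprimeprime}. I expect it to be obtained from $G_0$ by splitting every vertex that is incident to both an $a$-labeled and a $b$-labeled edge into an $a$-copy and a $b$-copy. The only such vertices are $I$ (incident to the $a$-edges $e_2,e_4$ and the $b$-edges $e_3,e_5$) and $III$ (incident to $e_4$ and $e_5$); the vertices $II$ and $IV$ each see only one of the labels in $\{a,b\}$ together with the $*$-edge $e_1=\{II,IV\}$ and need not be split. Reattaching the edges according to their labels yields the path $III_a - I_a - IV - II - I_b - III_b$ on six vertices, so $G_0'$ is a tree, and therefore bipartite.

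Given this, I would prove the contrapositive of the proposition: any closed walk $W$ in $G_0$ that contains no cyclically consecutive pair of edges of labels $a$ and $b$ must have even length. The key step is to lift $W$ to a closed walk $\widetilde W$ in $G_0'$ of the same length. At each visit of $W$ to a vertex $v$, the lift is determined by the labels of the two edges of $W$ incident there: if $v\in\{II,IV\}$ then $v$ is unchanged, while if $v\in\{I,III\}$ then $v$ is sent to $v_a$ or $v_b$ according as those two labels are both $a$ or both $b$. The hypothesis is exactly what makes this rule well defined, because at each visit to a split vertex the two incident walk-edges carry labels in $\{a,b\}$ (these are the only labels meeting $I$ or $III$) and cannot form an $ab$ or $ba$ pattern, so they must agree. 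Applied cyclically at the base vertex of $W$, the same consistency forces the lift to close up into a genuine closed walk in $G_0'$.

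Once $\widetilde W$ is in place, the conclusion is immediate: every closed walk in a tree has even length, so $|\widetilde W|=|W|$ is even. I expect the only slightly delicate point to be the bookkeeping that each lifted edge really exists in $G_0'$ and that cyclic closure holds at the base vertex; all of the combinatorial substance is packaged into the single structural observation that $G_0'$ happens to be a tree.
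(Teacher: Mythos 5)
Your proposal is correct and is essentially the paper's own argument: both split $\mathrm I$ and $\mathrm{III}$ into $a$- and $b$-copies to form the unfolded graph $G_0'$ (a path, hence a tree), lift the walk under the no-$ab$-transition hypothesis, and conclude evenness from bipartiteness of a tree. Your write-up merely spells out the well-definedness of the lift and the cyclic closure, which the paper treats as immediate.
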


\begin{proof}
Suppose for contradiction that $W$ does not contain two consecutive
edges of labels $a$ and $b$.
Since $W$ cannot switch between the $a$-edges and the $b$-edges in
I or III,
we can split $\rm I$
(resp., $\rm III$) into two nodes ${\rm I}_a$ and ${\rm III}_b$
(resp., ${\rm III}_a$ and ${\rm III}_b$) such that every $a$-labeled
edge that is incident to ${\rm I}$  (resp., ${\rm III}$) in $G_0$ becomes
incident to ${\rm I}_a$ (resp., ${\rm III}_a$) and every
$b$-labeled edge that is incident to ${\rm I}$  (resp., ${\rm III}$)
in $G_0$ becomes incident to ${\rm I}_b$ (resp., ${\rm III}_b$).
In the resulting graph $G_0'$, which is shown in
Figure~\ref{fig:Gprimeprime},
we can find a closed walk $W'$ that corresponds to $W$ and that uses the
 edges with the same name as~$W$.
Since $G_0'$ is a path, every closed walk has even length.
Thus, $W$ cannot have odd length.
\end{proof}

\section{General Assumptions and Notations}

Let $P$ and $Q$ be two simple polygons with sides
 $p_0,p_1,\ldots,p_{m-1}$ and $q_0,q_1,\ldots,q_{n-1}$.
 We assume that $m\ge3$ and $n\ge3$ are odd numbers. 
Addition and subtraction of indices is modulo $m$ or $n$,
respectively.
We consider the sides $p_i$ and $q_j$ as closed line segments.
The condition that the polygon $P$ is simple means that
its edges are pairwise disjoint except for the unavoidable
common endpoints between \emph{consecutive} sides $p_i$  and $p_{i+1}$.
Throughout this paper, unless stated otherwise, we regard a polygon
as a piecewise linear closed curve, and we disregard the region that it
encloses. Thus, by intersections between $P$ and $Q$, we mean
intersection points between the polygon \emph{boundaries.}

As mentioned, we assume that the vertices of $P$ and $Q$ are in
general position (no three of them on a line), and so every
intersection point between $P$ and $Q$ is an interior point
of two polygon sides.

\subparagraph{The Disjointness Graph.}
As in \cite{czechs},
our basic tool of analysis is
the \emph{disjointness graph}
of $P$ and $Q$,
which we denote by $G\NI=(V\NI,E\NI)$.
(Its original name in \cite{czechs} is \emph{non-intersection graph}.)
It is a bipartite graph with node set $V\NI=\{p_0,p_1,\ldots,p_{m-1}\} \cup \{q_0,q_1,\ldots,q_{n-1}\}$
and edge set $E\NI=\{\,(p_i,q_j) \mid p_i \cap q_j = \emptyset\,\}$.
(Since we are interested in the situation where almost all pairs of edges
intersect,
the {disjointness graph} is more useful than its more commonly used complement, the
intersection graph.)
\looseness-1 

Our goal is to bound from above the number of connected components of $G\NI$.

\begin{observation}\label{obs:CCs}
If $G\NI$ has at most $C$ connected components, then $G\NI$ has at least $m+n-C$ edges.
Thus, there are at least $m+n-C$ pairs of a side of $P$ and a side of
$Q$ that do not intersect, and there are at most
$mn-(m+n)+C$ crossings between $P$ and $Q$.
\observationqed
\end{observation}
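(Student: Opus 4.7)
The plan is to reduce the statement to the elementary fact that a graph on $N$ vertices with $k$ connected components has at least $N-k$ edges, which follows because each component of size $n_i$ contains a spanning tree with $n_i - 1$ edges, and summing over components gives $\sum_i (n_i - 1) = N - k$.

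Concretely, I would first note that $G\NI$ has exactly $m+n$ vertices (the $m$ sides of $P$ together with the $n$ sides of $Q$). Applying the spanning-tree bound with $N = m+n$ and at most $C$ components yields at least $(m+n) - C$ edges in $G\NI$. By the very definition of $E\NI$, each such edge is an unordered pair $(p_i,q_j)$ with $p_i \cap q_j = \emptyset$, so this directly gives at least $m+n-C$ disjoint pairs.

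For the crossing count, I would observe that the total number of pairs $(p_i,q_j)$ is $mn$, so the number of pairs that \emph{do} intersect is at most $mn - (m+n) + C$. Finally, by the general-position assumption made earlier (no two sides parallel and no three vertices collinear), any two sides that intersect do so in a single interior point, so the number of pairs that intersect equals the number of crossings between the boundaries of $P$ and $Q$. This yields the claimed bound of $mn-(m+n)+C$ crossings.

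There is essentially no obstacle here: the only subtlety worth flagging is the implicit use of general position to convert ``intersecting pairs'' into ``crossing points'' with multiplicity one, and the fact that vertices of $P$ and $Q$ never coincide with each other or lie on the other polygon's sides, so no intersection gets double-counted or lost. With that understood, the observation follows immediately from the tree-edge count.
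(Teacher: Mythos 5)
Your proof is correct and is exactly the intended argument: the paper states this as an Observation with an immediate \qed precisely because the spanning-tree bound (a graph on $N$ vertices with $k$ components has at least $N-k$ edges) plus the general-position remark is all that is needed. Nothing to add.
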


\subparagraph{Geometric Notions.}

Let $s$ and $s'$ be two line segments.
We denote by $\ell(s)$ the line through $s$ and by $I(s,s')$ the
intersection of $\ell(s)$ and $\ell(s')$
see Figure~\ref{fig:lem1}.
We say that $s$ and $s'$ are \emph{avoiding} if neither of them
contains $I(s,s')$.
(This requirement is stronger than just disjointness.)
If $s$ and $s'$ are avoiding
or share an endpoint, we denote
by $\ray_{s'}(s)$ the
ray from $I(s,s')$ to infinity that contains~$s$,
and
by $\ray_s(s')$ the
ray from $I(s,s')$ to infinity that contains~$s'$.
Moreover, we denote by $\Cone(s,s')$ the convex cone with apex
$I(s,s')$
between these two rays.

\begin{observation}\label{obs:endpoint-in-cone}
  If a segment $s''$ that does not go through $I(s,s')$
  has one of its endpoints in the interior of $\Cone(s,s')$, then
$s''$ cannot intersect both
$\ray_{s'}(s)$ and
$\ray_{s}(s')$.
In particular, it cannot intersect
both $s$ and $s'$.
\observationqed
\end{observation}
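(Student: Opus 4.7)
The plan is to prove this observation by exploiting the convexity of $\Cone(s,s')$. Since the cone is a convex region bounded by the two rays $\ray_{s'}(s)$ and $\ray_{s}(s')$ meeting at the apex $I(s,s')$, the key classical fact to invoke is that a line segment intersects the boundary of a convex planar region in at most two points (assuming it is not contained in the boundary).

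First I would observe that a line segment with one endpoint in the \emph{interior} of a convex set $K$ crosses the boundary $\partial K$ at most once. Indeed, if both endpoints lie in $K$, then by convexity the whole segment lies in $K$, and the only possible intersection with $\partial K$ would be at the second endpoint if it happens to lie there; if the other endpoint lies outside $K$, the segment must exit, and having done so it cannot re-enter (a second crossing would require re-exiting again to reach an outside endpoint, producing three boundary crossings, contradicting the at-most-two bound on a segment through a convex region). So the segment $s''$, with one endpoint in the interior of $\Cone(s,s')$, crosses $\partial\Cone(s,s')$ in at most one point.

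Next I would use the hypothesis that $s''$ avoids the apex $I(s,s')$. The boundary $\partial \Cone(s,s')$ is the union of the two rays $\ray_{s'}(s)$ and $\ray_s(s')$, whose only common point is the apex. Hence any boundary crossing of $s''$ with $\partial\Cone(s,s')$ other than at the apex lies on exactly one of the two rays. Since $s''$ has at most one such boundary crossing, it cannot meet both $\ray_{s'}(s)$ and $\ray_s(s')$. The final sentence of the observation is then immediate from $s\subseteq \ray_{s'}(s)$ and $s'\subseteq \ray_s(s')$.

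The argument is essentially a one-line convexity fact, so there is no real obstacle; the only subtle point is ensuring the apex is properly excluded by the assumption that $s''$ does not pass through $I(s,s')$, so that each boundary intersection can be unambiguously assigned to one of the two rays. I expect this to be written as a short proof with perhaps a reference to Figure~\ref{fig:lem1}, or, since it is labeled an Observation, possibly left without proof entirely and closed immediately by \texttt{\textbackslash observationqed} as in the preceding observation.
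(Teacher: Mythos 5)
Your argument is correct; the paper provides no proof at all for this Observation (it is closed immediately with \texttt{\textbackslash observationqed}, exactly as you anticipated in your last sentence), and your convexity argument---the intersection of $s''$ with the closed convex cone is an interval containing the interior endpoint, so $s''$ can leave the cone at most once, and since it misses the apex that unique exit point lies on only one of the two boundary rays---is the natural way to fill in the omitted justification.
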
 

For a polygon side $s$ 
of $P$ or $Q$,
$\CC(s)$ denotes the connected component of the disjointness graph
$G\NI$ to which $s$
belongs.

\subsection{Associated Pairs of Consecutive Sides}

\begin{lemma}\label{lem:1}
Let $p_a$ and $p_b$ be two sides of $P$ that are either consecutive or avoiding such that $\CC(p_a) \ne \CC(p_b)$. 
Then there are two consecutive sides $q_i,q_{i \pm 1}$ of $Q$ such that $(p_a,q_i), (p_b,q_{i\pm 1}) \in E\NI$ and  $(p_a,q_{i\pm 1}), (p_b,q_{i}) \notin E\NI$.
Furthermore, $I(p_a,p_b) \in \Cone(q_i,q_{i \pm 1})$ or $I(q_i,q_{i \pm 1}) \in \Cone(p_a,p_b)$.
\end{lemma}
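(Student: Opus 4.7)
My plan is to prove Lemma~\ref{lem:1} by encoding the cyclic sequence of sides of $Q$ as a closed walk of length $n$ in the multigraph $G_0$ of Proposition~\ref{prop:domino}. Since $n$ is odd, that proposition then hands me two cyclically consecutive edges of labels $a$ and $b$, which will translate directly into the consecutive pair $q_i, q_{i\pm 1}$ of sides of $Q$ that the lemma asks for. A preliminary observation is that $\CC(p_a)\neq\CC(p_b)$ forces every side of $Q$ to meet at least one of $p_a, p_b$: a side disjoint from both would itself link $p_a$ and $p_b$ in $G\NI$. Thus each $q_j$ is of one of three types, which I will label $a$ (meets $p_a$ only, hence $q_j\cap p_b=\emptyset$), $b$ (meets $p_b$ only), or $*$ (meets both).

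I would then assign each vertex of $Q$ a state in $\{\mathrm{I},\mathrm{II},\mathrm{III},\mathrm{IV}\}$ according to its location with respect to the lines $\ell(p_a), \ell(p_b)$ and the apex $I(p_a,p_b)$: state~I is the open cone $\Cone(p_a,p_b)$, state~II is the region across $\ell(p_b)$ from~I, state~IV is the region across $\ell(p_a)$ from~I, and state~III is the region opposite~I. A short case analysis over the four possible endpoint-region pairs of a side $q_j$, combined with whether the crossings of $q_j$ with $\ell(p_a), \ell(p_b)$ fall on the segments or on the extensions beyond them, should then show that every Type-$a$ side corresponds to one of the $a$-edges $e_2$ (I--IV) or $e_4$ (I--III), every Type-$b$ side to one of the $b$-edges $e_3$ (I--II) or $e_5$ (I--III), and every Type-$*$ side to the $*$-edge $e_1$ (II--IV). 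In particular, the two parallel edges $e_4$ and $e_5$ between~I and~III exactly capture the Type-$a$ (resp.\ Type-$b$) sides that leave $\Cone(p_a,p_b)$ through $p_a$ (resp.~$p_b$) and then continue across the extension of the other supporting line to reach the opposite region. All other would-be transitions (self-loops, or II--III, III--IV, or II--IV without both crossings on the segments) would force the side to be disjoint from both $p_a$ and $p_b$, which is ruled out.

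With the encoding in place, the walk around $Q$ is a closed walk of length $n$ in $G_0$, and Proposition~\ref{prop:domino} supplies two cyclically consecutive edges of labels $a$ and $b$; these are precisely the desired pair $(q_i, q_{i\pm 1})$ with the disjointness pattern claimed. For the ``furthermore'' clause, the only vertices of $G_0$ incident to both an $a$-edge and a $b$-edge are~I and~III, so the shared vertex $I(q_i,q_{i\pm 1})$ of the selected pair falls into one of these two states. If it lies in state~I, then by construction $I(q_i,q_{i\pm 1})\in\Cone(p_a,p_b)$, which is the second case of the clause. If it lies in state~III, the two edges of the walk must be exactly the parallel pair $e_4, e_5$, so $q_i$ and $q_{i\pm 1}$ both go between the region opposite the cone and $\Cone(p_a,p_b)$, and each passes through the interior of $p_a$ or $p_b$; a short angular comparison at $I(q_i,q_{i\pm 1})$ then shows that the direction from $I(q_i,q_{i\pm 1})$ to the apex $I(p_a,p_b)$ lies strictly between the two rays carrying $q_i$ and $q_{i\pm 1}$, giving $I(p_a,p_b)\in\Cone(q_i,q_{i\pm 1})$, the first case.

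The main obstacle will be verifying that the encoding is faithful in all geometric configurations and uniformly in both the consecutive and the avoiding sub-case of $p_a, p_b$. When $p_a$ and $p_b$ are only avoiding, the apex $I(p_a,p_b)$ sits off both segments and the extensions of $p_a, p_b$ inside $\Cone(p_a,p_b)$ play a nontrivial role, so I will have to enumerate carefully which crossing geometries a side with a given pair of endpoint states may exhibit; in particular I must ensure that the two parallel edges $e_4, e_5$ accommodate exactly the Type-$a$, Type-$b$ sides that slip past one of $p_a, p_b$ via an extension and that no other configuration accidentally looks like one of them. Once the encoding is shown to be faithful and the short angular check in the state-III case is in place, Proposition~\ref{prop:domino} closes the argument.
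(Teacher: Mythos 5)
Your proposal is correct and follows essentially the same route as the paper: encode the cyclic side-sequence of $Q$ as a closed odd walk in $G_0$ via the four regions cut out by $\ell(p_a)$ and $\ell(p_b)$, invoke Proposition~\ref{prop:domino} to extract consecutive edges of labels $a$ and~$b$, and read off the ``furthermore'' clause from the fact that only vertices I and III of $G_0$ carry both an $a$- and a $b$-edge.

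One detail in your encoding is stated too strongly, though it does not harm the argument. You claim that a II--IV side without both crossings on the segments would be disjoint from both $p_a$ and $p_b$, and hence that every Type-$a$ side lands on an $a$-edge. This is false: a II--IV segment passing through $\Cone(p_a,p_b)$ can hit $p_a$ on the segment while meeting $\ell(p_b)$ only on the extension beyond $p_b$ (for example, with $p_a=[(0,0),(1,0)]$, $p_b=[(0,0),(0,1)]$, the segment from $(-1,6)$ to $(1,-2)$ crosses the $x$-axis at $(0.5,0)\in p_a$ but the $y$-axis at $(0,2)\notin p_b$). Such a side is Type-$a$ yet is assigned the $*$-labeled edge $e_1$. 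This is fine, because what the proof actually needs is the \emph{converse} direction: that every side assigned an $a$-label (edges $e_2$, $e_4$) genuinely meets $p_a$ and misses $p_b$, and symmetrically for $b$-labels, so that the two consecutive $a$/$b$-labeled edges produced by Proposition~\ref{prop:domino} yield the stated disjointness pattern. That converse does hold for each of $e_2,e_3,e_4,e_5$, so your argument goes through; just drop the overstatement that the label classes and intersection types coincide exactly.
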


The sign `$\pm$' is needed since we do not know which of the consecutive sides intersects $p_i$ and is disjoint from $p_{i+1}$.

\begin{proof}
We may assume without loss of generality that $I(p_a,p_b)$ is the origin, $p_a$ lies on the positive $x$-axis and the interior of $p_b$ is above the $x$-axis.
The lines $\ell(p_a)$ and $\ell(p_b)$ partition the plane into four
convex
cones (``quadrants'').
Denote them in counterclockwise order by ${\rm I},{\rm II},{\rm
  III},{\rm IV}$,
starting with ${\rm I}=\Cone(p_a,p_b)$, see Figure~\ref{fig:lem1}.
\begin{figure}[htb]
    \centering
    \includegraphics{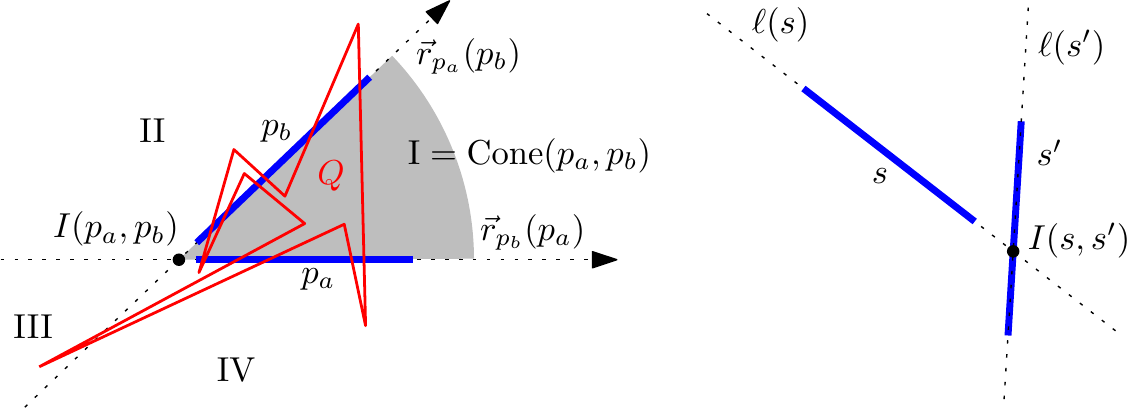}
    \caption{
      How an odd polygon $Q$ can intersect two segments. The segments
    $p_a$ and $p_b$ are avoiding, whereas $s$ and $s'$ are disjoint but non-avoiding.}
	\label{fig:lem1}
\end{figure}
Every side of $Q$ must intersect $p_a$ or $p_b$ (maybe both), since $\CC(p_a) \ne \CC(p_b)$.
One can now check that traversing the sides of $Q$ in order generates a closed
walk $W$ in the graph $G_0$
of Figure~\ref{fig:Gprime}. 
For example, a side of $Q$ that we traverse from its endpoint in ${\rm
  I}$ to its endpoint in ${\rm III}$ and that intersects $p_a$ corresponds
to traversing the edge $e_4=\{{\rm I},{\rm III}\}$
from ${\rm I}$ to ${\rm III}$,
whose label is $L(e_4)=a$.
We do not care which of $p_a$ and $p_b$ are crossed
when we move between
${\rm II}$ and~${\rm IV}$.

It follows from Proposition~\ref{prop:domino} that $Q$ has
two consecutive sides $q_i,q_{i \pm 1}$ such that $q_i$ intersects
$p_b$ and does not intersect $p_a$, while $q_{i \pm 1}$ intersects $p_a$ and does not intersect~$p_b$. Hence, $(p_a,q_i), (p_b,q_{i\pm 1}) \in E\NI$ and  $(p_a,q_{i\pm 1}), (p_b,q_{i}) \notin E\NI$.
Furthermore, $I(q_i,q_{i \pm 1})$ must be either in ${\rm I}$ or ${\rm III}$ as these are the only nodes in $G_0$ that are
incident both to an edge labeled $a$ and an edge labeled $b$.
In the latter case $I(p_a,p_b) \in \Cone(q_i,q_{i \pm 1})$, and in the former case $I(q_i,q_{i \pm 1}) \in \Cone(p_a,p_b)$.
\end{proof}

Let $p_i,p_{i+1}$ be two sides of $P$ such that $\CC(p_i) \ne \CC(p_{i+1})$.
Then by Lemma~\ref{lem:1} there are sides $q_j,q_{j \pm 1}$  of $Q$
such that $(p_i,q_j), (p_{i+1},q_{j \pm 1}) \in E\NI$.
We say that the pair $q_j,q_{j \pm 1}$ is \emph{associated} to $p_i,p_{i+1}$.
By Lemma~\ref{lem:1} we have $I(q_j,q_{j \pm 1}) \in \Cone(p_i,p_{i+1})$ or $I(p_i,p_{i+1}) \in \Cone(q_j,q_{j \pm 1})$.
If the first condition holds we say that $p_i,p_{i+1}$ is
\emph{hooking} and $q_j,q_{j \pm 1}$ is \emph{hooked},
see Figure~\ref{fig:hooked}.
 In the
second case we say that $p_i,p_{i+1}$ is \emph{hooked} and $q_j,q_{j \pm
  1}$ is \emph{hooking}. Note that it is possible that a pair of
consecutive sides is both hooking and hooked (with respect to two
different pairs from the other polygon or even with respect to a
single pair, as in Figure~\ref{fig:hooked}c).

\begin{figure}[htb]
  \centering
  \includegraphics{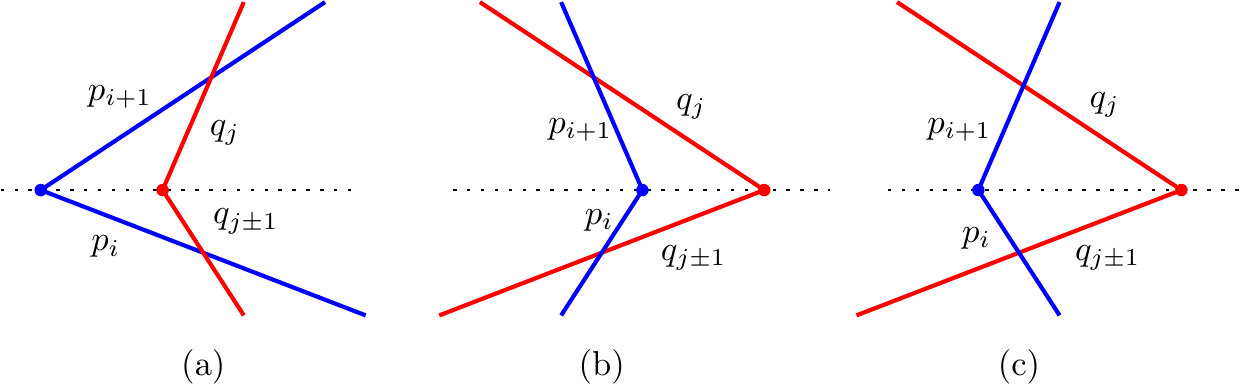}
  \caption{Hooking and hooked pairs of consecutive sides.
(a)~The pair $p_i,p_{i+1}$ is
 {hooking} and the associated pair $q_j,q_{j \pm 1}$ is {hooked}.
 (b)~vice versa.
 (c)~Both pairs are both hooking and hooked.
  }
  \label{fig:hooked}
\end{figure}

\begin{observation}[The Axis Property]
  \label{lem:axis}
  If the pair $p_i,p_{i+1}$ and the pair
  $q_j,q_{j \pm 1}$ are {associated} such that $(p_i,q_j), (p_{i+1},q_{j \pm 1}) \in E\NI$, then the line
  through
  $I(p_i,p_{i+1})$ and  $I(q_j,q_{j \pm 1})$
separates
$p_i$ and  $q_{j \pm 1}$ on the one side from
$p_{i+1}$ and  $q_j$ on the other side.
\observationqed
\end{observation}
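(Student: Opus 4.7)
The plan is to take $\ell$ to be the line through $O := I(p_i, p_{i+1})$ and $X := I(q_j, q_{j \pm 1})$, and to argue in three steps. Because both pairs are consecutive, $O$ is a vertex of $P$ and $X$ is a vertex of $Q$, so by general position no third vertex of $P \cup Q$ lies on $\ell$. In particular, none of the four supporting lines $\ell(p_i), \ell(p_{i+1}), \ell(q_j), \ell(q_{j\pm 1})$ coincides with $\ell$, since each contains two polygon vertices and $\ell$ contains $O$ or $X$, which would produce a third collinear vertex. The first step I would record is therefore that each of the four segments meets $\ell$ only at its endpoint $O$ or $X$, and hence lies entirely in one closed half-plane bounded by $\ell$.

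For the second step I would invoke Lemma~\ref{lem:1}, which guarantees that either $X \in \Cone(p_i, p_{i+1})$ or $O \in \Cone(q_j, q_{j\pm 1})$. In the first subcase the ray from $O$ through $X$ runs strictly inside the cone between $p_i$ and $p_{i+1}$ (strictness again by general position), so the direction of $\ell$ at $O$ is sandwiched strictly between those of $p_i$ and $p_{i+1}$, placing the two segments into opposite closed half-planes of $\ell$. In the second subcase, symmetrically, $\ell$ enters $\Cone(q_j, q_{j\pm 1})$ at $X$ and splits $q_j$ and $q_{j\pm 1}$ into opposite closed half-planes. Either way, at least one of the two pairs is already known to sit on opposite sides of $\ell$.

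For the third step I would use the remaining content of Lemma~\ref{lem:1}, namely that the pairs $(p_i, q_{j\pm 1})$ and $(p_{i+1}, q_j)$ are not in $E\NI$ and so actually cross. Each of those crossing points is an interior point of both segments involved and, by general position, strictly off $\ell$. Combined with the first step, this forces $p_i$ and $q_{j\pm 1}$ to lie in the \emph{same} open half-plane of $\ell$, and likewise $p_{i+1}$ and $q_j$. Together with whichever splitting was produced in the second step, this pins down the partition: $\{p_i, q_{j\pm 1}\}$ lies on one side of $\ell$ and $\{p_{i+1}, q_j\}$ on the other, as required.

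The only place I anticipate having to be careful is the case split in the second step, where I must check that each of the ``hooking'' and ``hooked'' configurations from Lemma~\ref{lem:1} really does split at least one of the two pairs at the appropriate apex vertex. After that is settled, the intersection-based finish in the third step is completely symmetric between $P$ and $Q$ and uses nothing beyond the interior-intersection property given by general position.
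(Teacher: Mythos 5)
Your proof is correct. The paper states the Axis Property as an observation with no written proof (the \observationqed macro immediately follows), so there is no explicit argument to compare against; your three-step argument — (i) general position keeps each of the four segments in a closed half-plane of $\ell$ touching $\ell$ only at its apex, (ii) the cone condition from the definition of ``associated'' makes at least one pair straddle $\ell$, and (iii) the forced crossings $p_i \cap q_{j\pm 1}$ and $p_{i+1}\cap q_j$ (interior points off $\ell$) glue the remaining segments to the correct sides — is a sound and complete way to justify what the paper leaves to the reader.
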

We call this line the \emph{axis} of the associated pairs. In our
figures
it appears as a dotted line when it is shown.

\section{The Principal Structure Lemma about Pairs of Associated Pairs}

\begin{lemma}\label{lem:2}
	Let $p_i,p_{i+1},p_j,p_{j+1}$ be two pairs of consecutive sides of $P$
	that belong to four different connected components of $G\NI$.
	Then it is impossible that both $p_i,p_{i+1}$ and $p_j,p_{j+1}$ are hooked or that both pairs are hooking.
\end{lemma}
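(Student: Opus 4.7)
The plan is to argue by contradiction; we only need to treat the \emph{both hooking} case, since \emph{both hooked} follows by exchanging the roles of $P$ and $Q$: if both $p$-pairs are hooked, then their associated pairs (produced by Lemma~\ref{lem:1}) are two pairs of consecutive sides of $Q$ that are both hooking and still lie in four distinct components, so we apply the hooking case with the roles swapped.

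Assume both $(p_i,p_{i+1})$ and $(p_j,p_{j+1})$ are hooking, with associated hooked pairs $(q_\alpha,q_{\alpha'})$ and $(q_\beta,q_{\beta'})$ where $\alpha'=\alpha\pm1$, $\beta'=\beta\pm1$. Write $X=I(p_i,p_{i+1})$, $Y=I(p_j,p_{j+1})$, $A=I(q_\alpha,q_{\alpha'})$, $B=I(q_\beta,q_{\beta'})$. The hooking assumption gives $A\in\Cone(p_i,p_{i+1})$ and $B\in\Cone(p_j,p_{j+1})$; Lemma~\ref{lem:1} gives $\CC(q_\alpha)=\CC(p_i)$, $\CC(q_{\alpha'})=\CC(p_{i+1})$, and analogously for $q_\beta, q_{\beta'}$, so the eight sides share the same four components, two per component.

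The first key step is the elementary observation that any two sides in different components of $G\NI$ must actually cross in the plane: if they were disjoint they would be joined by an edge of $G\NI$ and would share a component. Under the four-component hypothesis this forces each $q$-side to cross exactly three of the four $p$-sides, namely all but the one in its own component. In particular, both $q_\beta$ and $q_{\beta'}$ cross both $p_i$ and $p_{i+1}$, and both $q_\alpha$ and $q_{\alpha'}$ cross both $p_j$ and $p_{j+1}$. Combined with Observation~\ref{obs:endpoint-in-cone} applied at $X$ (where $A$ already lies in $\Cone(p_i,p_{i+1})$), this forces the far endpoints of $q_\beta$ and $q_{\beta'}$ to lie outside $\Cone(p_i,p_{i+1})$; symmetrically at $Y$, the far endpoints of $q_\alpha$ and $q_{\alpha'}$ must lie outside $\Cone(p_j,p_{j+1})$. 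The axis property (Observation~\ref{lem:axis}) additionally provides two separating lines, $XA$ and $YB$, each pinning its pair of involved sides to prescribed half-planes.

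The final and main step is a geometric case analysis on the relative configuration of the two cones at $X$ and $Y$: whether $Y\in\Cone(p_i,p_{i+1})$ or not, whether the pairs $(p_i,p_{i+1})$ and $(p_j,p_{j+1})$ share an endpoint in $P$, and which of $p_j$ or $p_{j+1}$ bounds the half-plane of the axis $XA$ that is supposed to contain~$q_\alpha$. In each resulting subcase, the combination of (i) the three forced crossings per $q$-side, (ii) the endpoint-outside-cone restrictions, and (iii) the axis separations routes some $q$-arm so that a required crossing with $p_j$ or $p_{j+1}$ is blocked, or a forbidden crossing with $p_i$ or $p_{i+1}$ is forced---a contradiction either way. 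The principal obstacle, and the heart of the proof, is organizing the case split cleanly; I would fix first on which of the four half-planes of the two axes contains each apex $A$ and $B$, reducing the geometry to a small number of canonical pictures, each of which admits a short direct contradiction from one of the three constraints above.
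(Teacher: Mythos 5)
Your setup is sound and matches the paper's preliminary moves: the reduction of the ``both hooked'' case to the ``both hooking'' case by swapping $P$ and $Q$ is exactly what the paper does, and your observations that (i) any two sides from distinct components of $G\NI$ must cross, so each of the eight $q$-/$p$-sides crosses precisely three of the four opposing sides, (ii) Observation~\ref{obs:endpoint-in-cone} then forbids endpoints of those three-crossing sides from lying in the relevant cones, and (iii) the Axis Property supplies two separating lines, are all correct and are all ingredients the paper uses.

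However, there is a genuine gap: the entire case analysis, which is the substance of the lemma, is not carried out. You write that ``[i]n each resulting subcase, the combination of~(i)--(iii)\ldots routes some $q$-arm so that a required crossing\ldots is blocked, or a forbidden crossing\ldots is forced,'' but this is an assertion that a contradiction will appear, not a demonstration of it, and you acknowledge as much (``The principal obstacle, and the heart of the proof, is organizing the case split cleanly''). The paper spends the bulk of the proof on precisely this: it normalizes four of the eight sides (one pair from each polygon, two solid sides and two dashed sides in its figures) by an affine map, names six faces $F_1,\ldots,F_6$ of the resulting four-line arrangement, shows the hooking hypothesis confines $I(A,B)$ to $F_1\cup F_2\cup F_3$ while the Axis Property confines $I(a',b')$ to $F_2\cup F_4\cup F_5\cup F_6$, and dismisses the resulting twelve cases one by one with figures, several cases requiring a further sub-split on which face contains an endpoint of $a'$ or $b'$. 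Your proposed case axes (whether $Y\in\Cone(p_i,p_{i+1})$, whether the two $p$-pairs share a vertex, which half-plane of an axis contains which apex) are plausible organizing questions but are different from the paper's normalization-plus-face-location scheme, and without working them through there is no way to tell whether they terminate in a clean contradiction in each branch or instead fracture into an unmanageable tree. One small additional imprecision: Observation~\ref{obs:endpoint-in-cone} excludes \emph{both} endpoints of a segment crossing both rays from the cone's interior, not just the ``far'' endpoints; the stronger form is what the paper actually uses, e.g.\ in Cases~\ref{case-1-6} and~\ref{case-3-6}. In short, the reduction and the toolkit are right, but the proof is missing its core.
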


\begin{proof}
Suppose first that both pairs $p_i,p_{i+1}$ and $p_j,p_{j+1}$, are hooking
and let $q_{i'},q_{i'\pm 1}$ and $q_{j'},q_{j'\pm 1}$ be their associated (hooked) pairs such that: 
$(p_i, q_{i'}), (p_{i+1},q_{i'\pm 1}) \in E\NI$, 
$(p_j, q_{j'}), (p_{j+1},q_{j'\pm 1}) \in E\NI$,
$I(q_{i'},q_{i'\pm 1}) \in \Cone(p_i,p_{i+1})$ and $I(q_{j'},q_{j'\pm 1}) \in \Cone(p_j,p_{j+1})$.

For better readability, we rename
$p_i,p_{i+1}$ and $q_{i'},q_{i'\pm 1}$ as $a,b$ and $A,B$, and we rename
$p_j,p_{j+1}$ and $q_{j'},q_{j'\pm 1}$ as $a',b'$ and $A',B'$.
The small letters denote sides of $P$ and the capital letters denote
sides of $Q$.
In the new notation,
$a,b$ are consecutive sides of $P$ with an associated pair $A,B$ of
consecutive sides of $Q$,
and
$a',b'$ are two other consecutive sides of $P$ with an associated pair $A',B'$ of
consecutive sides of $Q$.
The disjointness graph $G\NI$ contains the edges
$(a, A), (b,B),(a', A'), (b',B')$.
Since $a,b,a',b'$ belong to different connected components of  $G\NI$,
it follows that the nodes $A,B,A',B'$, to which they are
connected, 
belong to the same four different connected components.
There can be no more edges among these
 eight nodes, and they induce a
matching in $G\NI$.
One can remember as a rule that
every side of $P$ intersects every side of $Q$ among the eight
involved sides, except when their
names differ only in their capitalization.
In particular, each of $A'$ and $B'$ intersects each of
$a$ and~$b$.
and hence they must lie as in Figure~\ref{fig:normalize}a.
To facilitate the future discussion,
we will now normalize the positions of these four sides.

\begin{figure}
	\centering
	\includegraphics[scale=0.92]{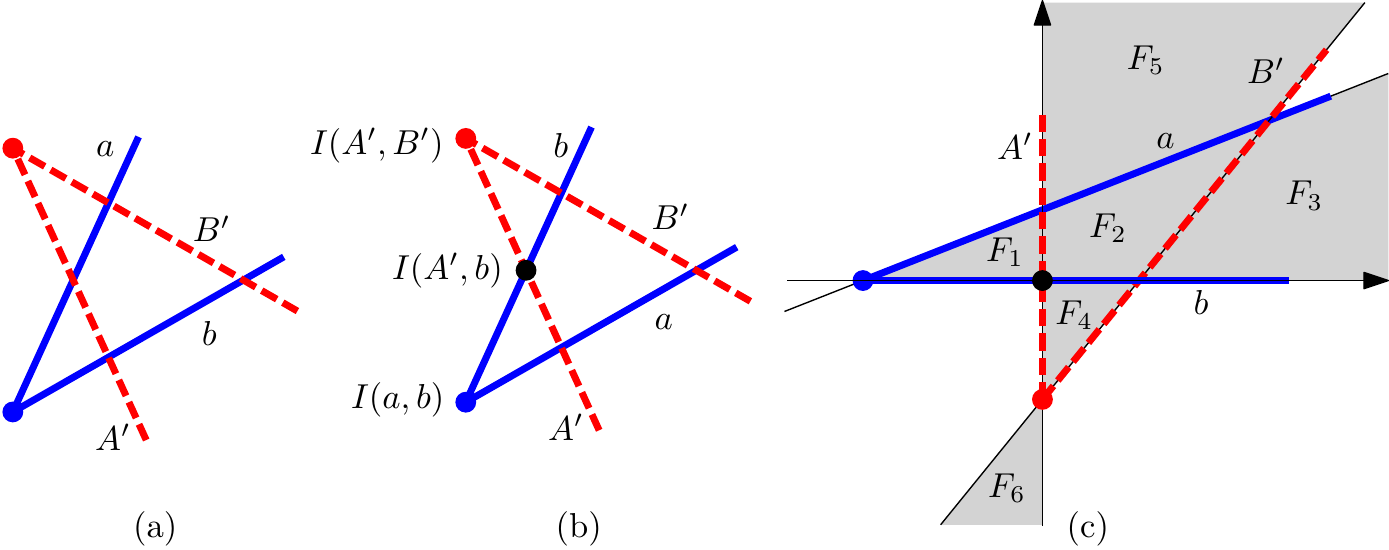}
	\caption{Normalizing the position of $a,b,A',B'$}
	\label{fig:normalize}
\end{figure}


We first ensure that
the intersection $I(A',b)$ is directly adjacent to the
two polygon vertices $I(a,b)$ and $I(A',B')$ in the arrangement
of the four sides, as shown in
Figure~\ref{fig:normalize}b.
This can be achieved 
by swapping the labels $a,A$ with the labels $b,B$ if necessary,
and by
independently swapping the labels $a',A'$ with $b',B'$ if necessary.
Our assumptions are invariant under these swaps.

By an affine transformation
we may finally assume that
$I(A',b)$ is the origin;
$b$ lies on the $x$-axis and is directed to the right;
and $A'$ lies on the $y$-axis and is directed upwards.
Then $a$ has a positive slope and its interior is in the upper half-plane,
and $B'$ has a positive slope and its interior is to
the right of the $y$-axis, see Figure~\ref{fig:normalize}c.

\begin{figure}[htb]
  \begin{minipage}[b]{0.5\textwidth}
    \centering
  \includegraphics[scale=1]{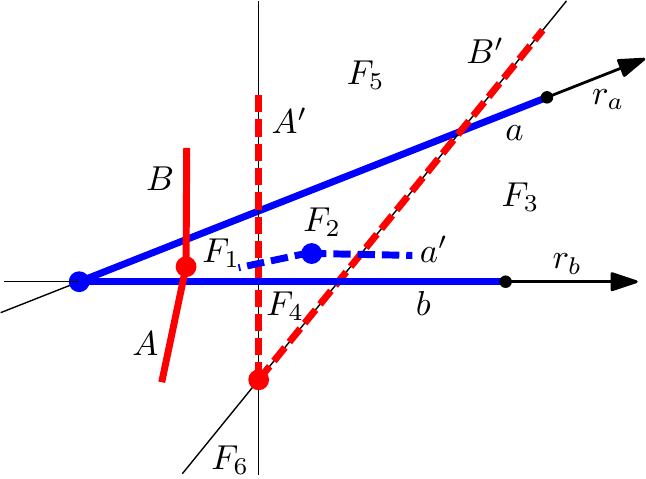}
  \caption{Case~\ref{case-1-2}: $I(A,B) \in F_1$, 
    $I(a',b')
          \in F_2$}
        \label{fig:1_2}
  \end{minipage}
  \begin{minipage}[b]{0.5\textwidth}
    \centering
  \includegraphics[scale=1]{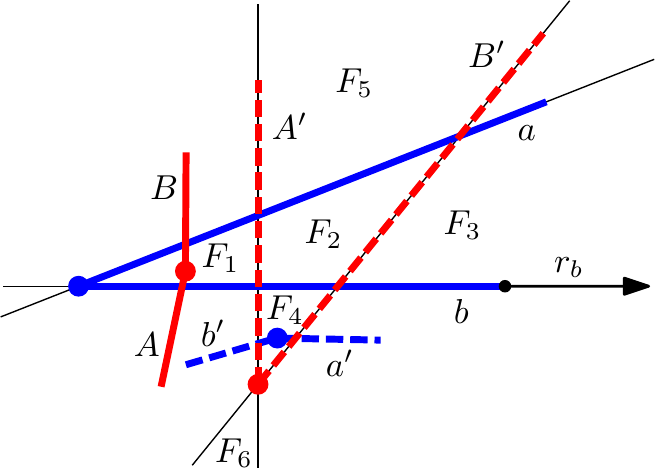}
  \caption{Case~\ref{case-1-4}: $I(A,B) \in F_1$, 
    $I(a',b') \in
  F_4$}
\label{fig:1_4}
  \end{minipage}
\end{figure}

The arrangement of the lines through $a,b,A',B'$
has $11$ faces, some of which are marked as $F_1,\ldots,F_6$ in
Figure~\ref{fig:normalize}.
Our current assumption is that both $a,b$ and $a',b'$ are hooking:
The hooking of $a,b$ means that
$I(A,B) \in \Cone(a,b) =
F_1 \cup F_2 \cup F_3$.
By the Axis Property (Observation~\ref{lem:axis}), the line
through $I(A',B')$ and
$I(a',b')$ must separate $A'$ from $B'$.
Therefore, the vertex
$I(a',b')$ can lie only in $F_2 \cup F_4 \cup F_5 \cup F_6$.
Thus, based on the faces that contain $I(A,B)$ and $I(a',b')$, there are $12$ cases to consider.
Some of these cases are symmetric, and all can be easily
dismissed, as follows.

In the figures, the four sides
$a',b',A',B'$, which are associated to the second associated pair
are dashed. All dashed sides of one polygon must intersect all solid
sides of the other polygon.

\begin{enumerate}

\item
\label{case-1-2}
  $I(A,B) \in F_1$ and $I(a',b') \in F_2$,
see
Figure~\ref{fig:1_2}
  (symmetric to $I(A,B) \in F_2$ and $I(a',b') \in F_4$).
Let $r_{a}$ (resp., $r_b$) be the ray on $\ell(a)$ (resp.,
$\ell(b)$) that goes from the right endpoint
of $a$ (resp., $b$) to the right.
Since $a'$ is not allowed to cross $b$,
  the only way for $a'$ to intersect $A$ is by crossing $r_{b}$.
Similarly, in order to intersect
$B$, $a'$ has to cross $r_{a}$.
However,
it cannot intersect both $r_{a}$ and~$r_{b}$, by
Observation~\ref{obs:endpoint-in-cone}.

Since we did not use the assumption that $A,B$ are hooked, the
analysis holds for the symmetric Case~\ref{case-2-4},
$I(A,B) \in F_2$ and $I(a',b') \in F_4$, as well.
  
\item
\label{case-1-4}
  $I(A,B) \in F_1$ and $I(a',b') \in F_4$,
see
Figure~\ref{fig:1_4}.
Since $a'$ is not allowed to cross $b$,
the only way for $a'$ to intersect $B$ is by crossing $r_{b}$.
However, in this case $a'$ cannot intersect $A$.


\begin{figure}[htb]
	\centering
	\includegraphics[scale=1]{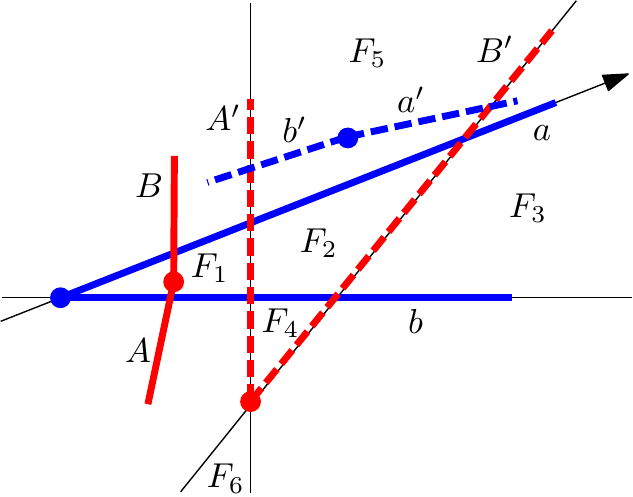}
	\caption{Case~\ref{case-1-5}:
          $I(A,B) \in F_1$ and $I(a',b') \in F_5$}
	\label{fig:1_5}
\end{figure}

\item
\label{case-1-5}
  $I(A,B) \in F_1$ and $I(a',b') \in F_5$, see Figure~\ref{fig:1_5}
  (symmetric to $I(A,B) \in F_3$ and $I(a',b') \in F_4$).
  Both $a'$ and $b'$ must intersect $A$, and they have to go below the
  line $\ell(b)$ to do so.
  However,
 $a'$ can only cross $\ell(b)$ to the right of $b$,
  and  $b'$ can only cross $\ell(b)$ to the left of~$b$,
  and therefore they cross $A$ from different sides.
  This is impossible, because $a'$ and $b'$ start from
  the same point.


\begin{figure}[tb]
  \begin{subfigure}[b]{0.48\textwidth}
    \centering
    \includegraphics{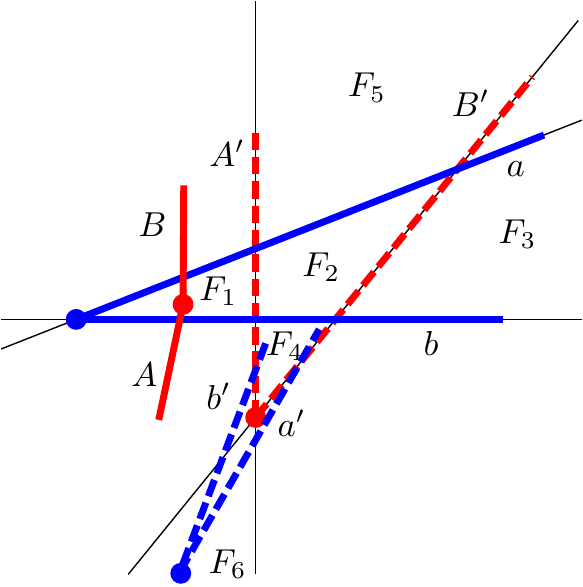}
    \caption{At least one of the sides $a'$ and $b'$ has an endpoint in $F_4$.}\label{fig:1_6a}
  \end{subfigure}
  \qquad
  \begin{subfigure}[b]{0.5\textwidth}
    \centering
    \includegraphics{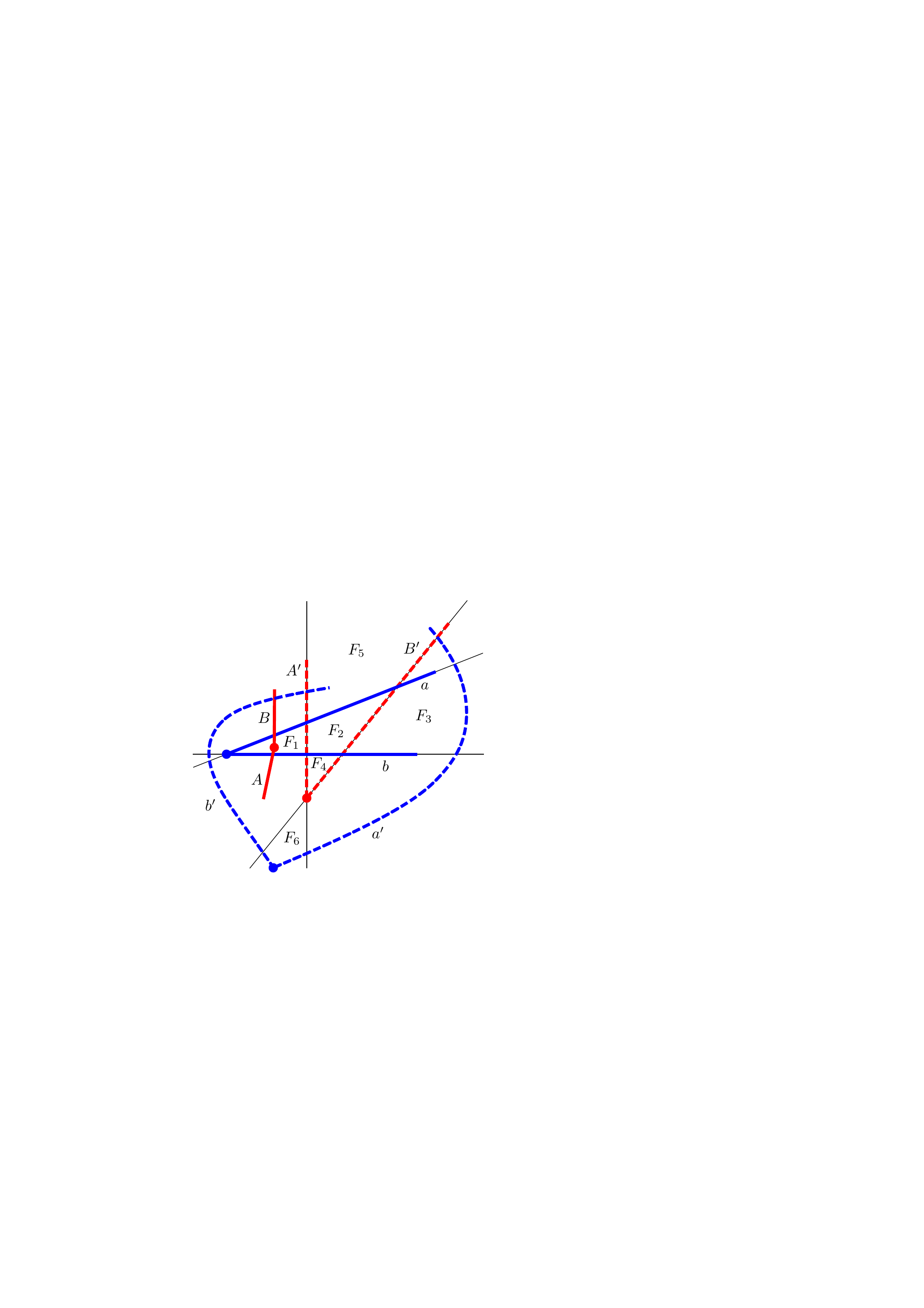}
    \caption{None of the sides $a'$ and $b'$ has an
      endpoint in~$F_4$.}
        \label{fig:1_6b}
  \end{subfigure}
  \caption{Case~\ref{case-1-6}: $I(A,B) \in F_1$
(or $I(A,B) \in F_2$, which is similar)
    and $I(a',b')
          \in F_6$.
        }
	\label{fig:1_6}
\end{figure} 
  
\item
\label{case-1-6}
  $I(A,B) \in F_1$ and $I(a',b') \in F_6$.
If one of the polygon sides $a'$ and $b'$ has an endpoint in
$F_4$ (see Figure~\ref{fig:1_6a}), then this side cannot intersect
$B$.
So assume otherwise, see Figure~\ref{fig:1_6b}.
The side $a'$
intersects $B'$ and is disjoint from $A'$, while
$b'$
is disjoint from $B'$ and intersects $A'$.
(Due to space limitation some line segments are drawn schematically as curves.)
Thus, each of $a'$ and $b'$ has an endpoint in $F_2 \cup F_5$.
But then $I(A,B) \in \Cone(a',b')$ and it follows
from
Observation~\ref{obs:endpoint-in-cone} that neither $A$ nor $B$ can
intersect both $a'$ and $b'$.

\item
\label{case-2-2}
  $I(A,B) \in F_2$ and $I(a',b') \in F_2$,
  see Figure~\ref{fig:2_2}.
Since $a',b'$ is hooking,
$I(A',B') \in \Cone(a',b')$,
and
the line segments
$a',b',A',b,B'$
enclose a convex pentagon.
%
\begin{figure}[htb]
  \begin{minipage}[b]{0.50\textwidth}
  \centering
    \includegraphics{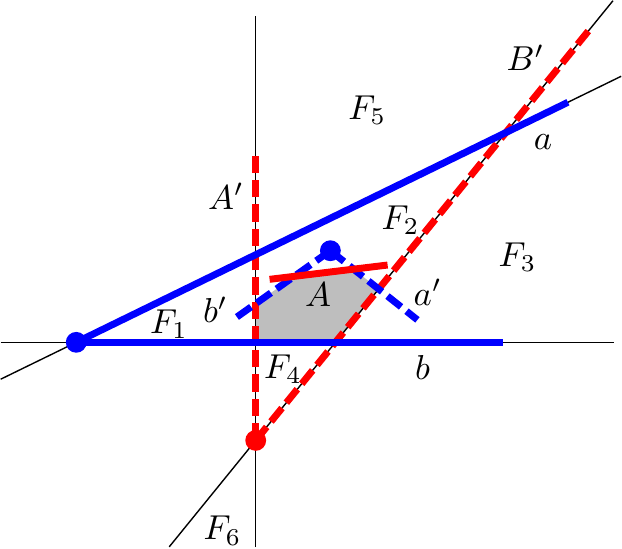}
    \caption{Case~\ref{case-2-2}: $I(A,B) \in F_2$, 
      $I(a',b') \in\nobreak
  F_2$}
\label{fig:2_2}
\end{minipage}
\qquad
  \begin{minipage}[b]{0.5\textwidth}
    \centering\includegraphics{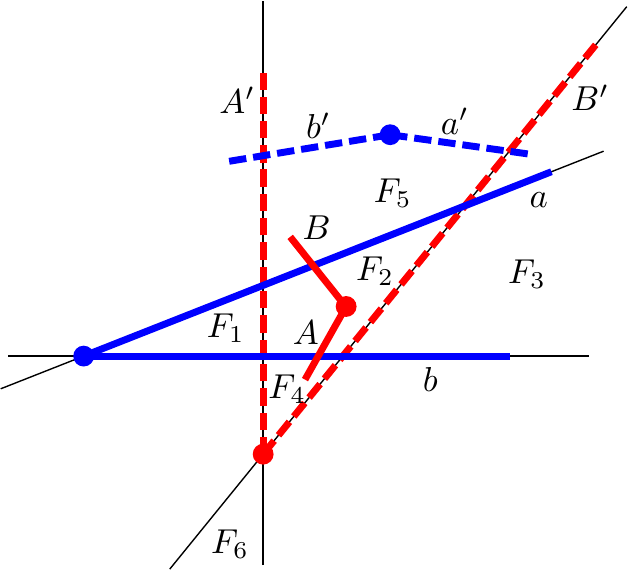}
    \caption{Case~\ref{case-2-5}: $I(A,B) \in F_2$, 
      $I(a',b') \in
  F_5$}
\label{fig:2_5}
\end{minipage}
\end{figure}
The polygon side $A$
must intersect $b$, $a'$ and $b'$, but it
 is restricted to $F_2 \cup F_4$.
It follows that $A$ must intersect three sides of the
pentagon, which is impossible.  (This is in fact the
only place where we need the assumption that 
$a',b'$ is hooking.)

\item
\label{case-2-4}
  $I(A,B) \in F_2$ and $I(a',b') \in F_4$.
  This is symmetric to
Case~\ref{case-1-2}. 

\item
\label{case-2-5}
  $I(A,B) \in F_2$ and $I(a',b') \in F_5$,
see Figure~\ref{fig:2_5}
  (symmetric to $I(A,B) \in F_3$ and $I(a',b') \in F_2$).
Then $A$ is restricted to $ F_2 \cup F_4$, while $a'$ and $b'$ do not intersect $F_2$ and $F_4$.
Therefore $A$ can intersect neither $a'$ nor $b'$. 


\item $I(A,B) \in F_2$ and $I(a',b') \in F_6$.
  This case is very similar to Case
\ref{case-1-6},
  where $I(A,B) \in F_1$ and $I(a',b') \in
  F_6$,
see Figure~\ref{fig:1_6}.
If one of the polygon sides $a'$ and $b'$ has an endpoint in
$F_4$, then it cannot intersect $B$.
Otherwise, $I(A,B) \in \Cone(a',b')$ and
therefore,
neither $A$
nor $B$ can
intersect both $a'$ and $b'$.

\item  $I(A,B) \in F_3$ and $I(a',b') \in F_2$.
  This is symmetric to
    Case~\ref{case-2-5}.
\item  $I(A,B) \in F_3$ and $I(a',b') \in F_4$.
  This is symmetric to
    Case~\ref{case-1-5}.


\begin{figure}[htb]
  \centering
\includegraphics{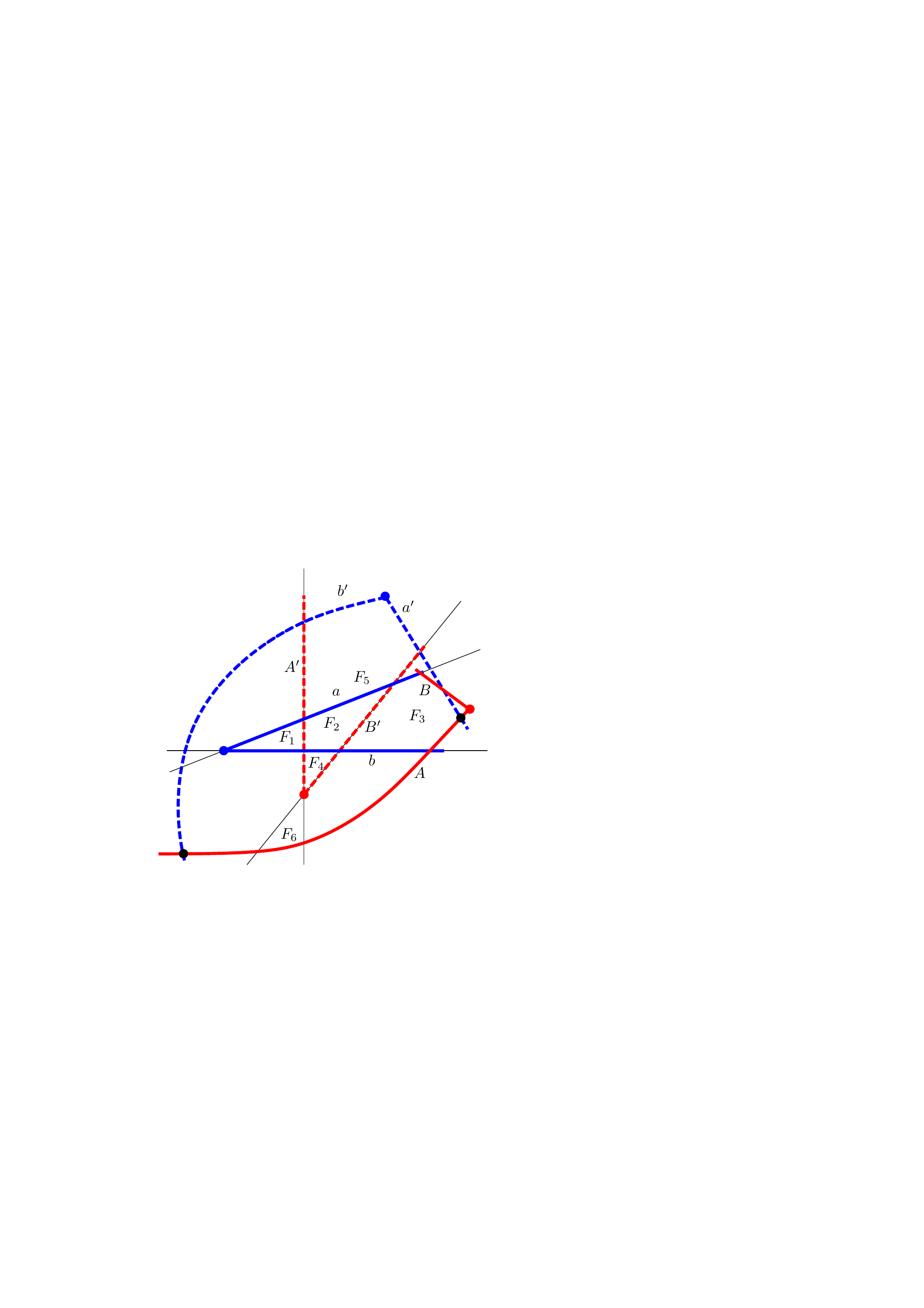}    
  \caption{Case~\ref{case-3-5}: $I(A,B) \in F_3$ and $I(a',b') \in F_5$}
  \label{fig:3_5}
\end{figure}
  \item
\label{case-3-5}
    $I(A,B) \in F_3$ and $I(a',b') \in F_5$,
see Figure~\ref{fig:3_5}.
Then the intersection of $b'$ and~$A$ can lie only in the lower left
quadrant. 
It follows that the triangle whose vertices are $I(a',b')$,
$I(a',A)$ and $I(A,b')$
contains $a$ and does not contain $I(A,B)$.
This in turn implies that $B$ cannot intersect both
$b'$ and $a$,
without intersecting $B'$.


\item
\label{case-3-6}
  $I(A,B) \in F_3$ and $I(a',b') \in F_6$,
  see Figure~\ref{fig:3_6}.
As in Case
\ref{case-1-6},
we may assume that neither $a'$ nor $b'$ has an endpoint in $F_4$,
since then this side could not intersect $B$.
We may also assume that $I(A,B) \notin \Cone(a',b')$ for otherwise neither $A$ nor $B$
intersects both of $a'$ and $b'$, according to
Observation~\ref{obs:endpoint-in-cone}.
If $a'$ has an endpoint in $F_2$, then it cannot intersect $B$ (see Figure~\ref{fig:3_6a}).
Otherwise, if $a'$ has an endpoint in $F_5$, then $B$ cannot intersect $b'$ (see Figure~\ref{fig:3_6b}).
\end{enumerate}

\begin{figure}[htb]
  \begin{subfigure}[b]{0.45\textwidth}
    \centering
    \includegraphics{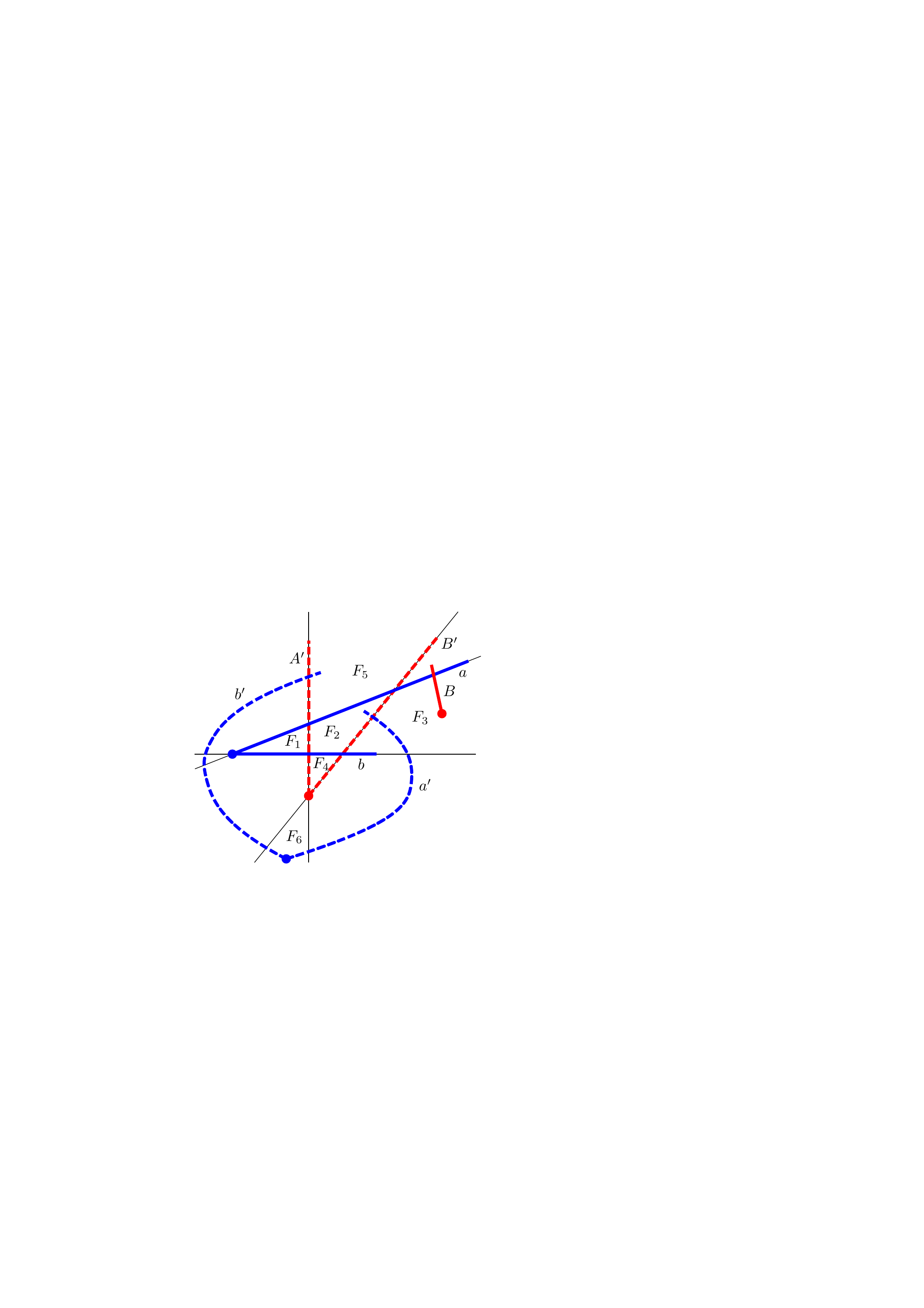}
  \caption{If $a'$ has an endpoint in $F_2$, then it cannot intersect $B$.}\label{fig:3_6a}
\end{subfigure}
\qquad
  \begin{subfigure}[b]{0.5\textwidth}
    \centering
    \includegraphics{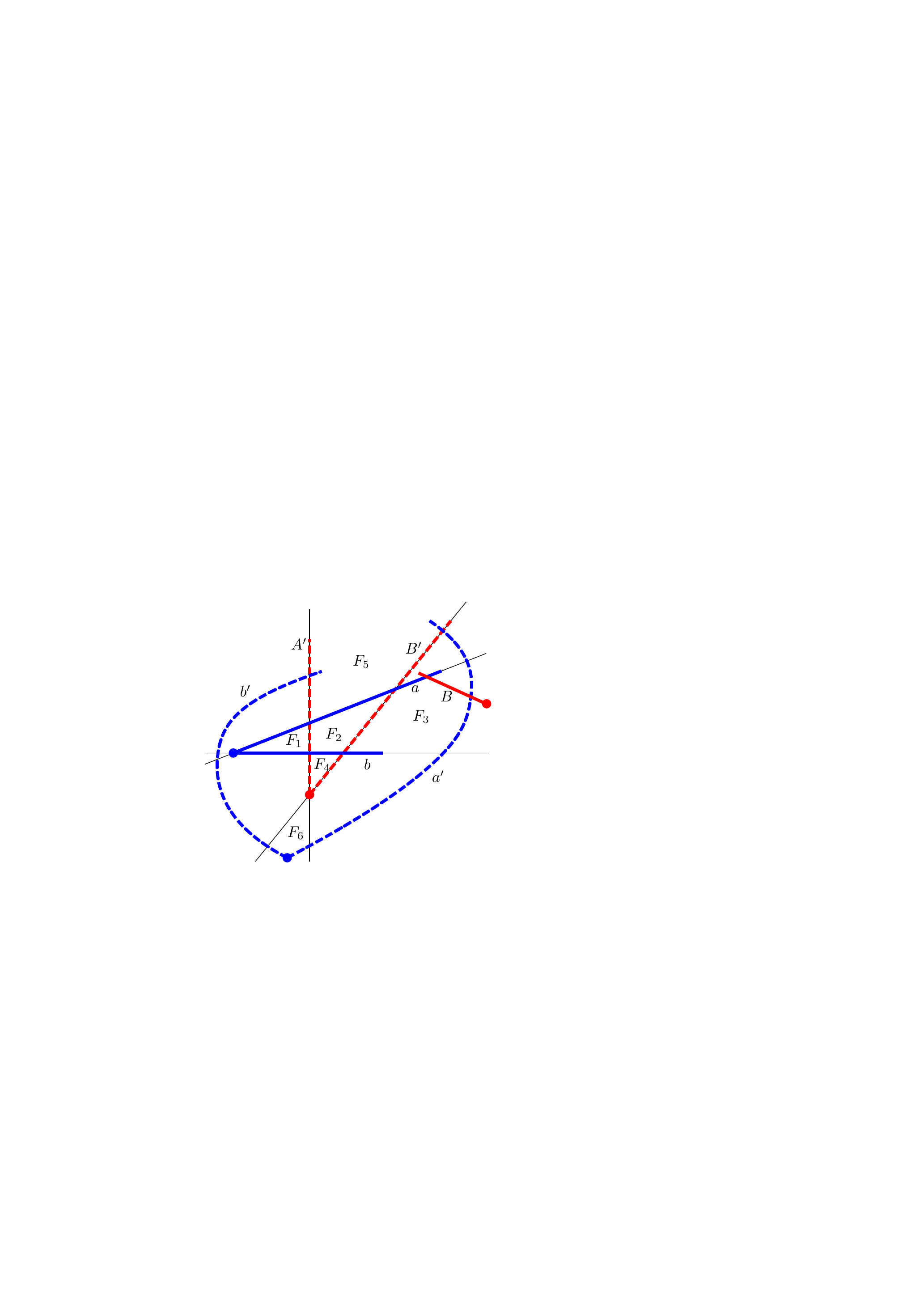}
    \caption{If $a'$ has an endpoint in $F_5$, then $B$ cannot intersect $b'$.}\label{fig:3_6b}
  \end{subfigure}
	\caption{Case~\ref{case-3-6}: $I(A,B) \in F_3$ and $I(a',b')
          \in F_6$.
        }
	\label{fig:3_6}
\end{figure} 

We have finished the case that
$a,b$ and $a',b'$ are hooking.
Suppose now that
$a,b$ and $a',b'$ are hooked, with respect to some pairs $A,B$ 
and $A',B'$.
Then $A,B$ is hooking with respect to $a,b$
and $A',B'$ is hooking with respect to $a',b'$.
Recall that $A$, $B$, $A'$ and $B'$ belong
to four different connected components. Hence, this case can be
handled as above,
after exchanging the capital letters with the small letters
(i.e., exchanging $P$ and~$Q$).
\end{proof}

\section{A Weaker Bound}
\label{sec:weaker}

The principal structure lemma is already powerful enough to get an
improvement over the previous best bound:
\begin{lemma}\label{cor:nCC}
$G\NI$ has at most $(n+5)/2$ connected components. 
\end{lemma}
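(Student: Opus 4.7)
My plan is to bound $c$, the number of connected components of $G\NI$. Let $c_Q$ denote the number of components containing at least one $Q$-side; since $n$ is odd, every $Q$-side is disjoint from some $P$-side and vice versa (a single segment cannot meet the odd polygon $n$ times), so $G\NI$ has no isolated vertices and $c=c_Q$. Walking around $Q$, I will call a cyclic pair $(q_i,q_{i+1})$ with $\CC(q_i)\ne\CC(q_{i+1})$ a \emph{transition}. Applying Lemma~\ref{lem:1} with the roles of $P$ and $Q$ swapped, every transition has an associated pair of consecutive $P$-sides, with respect to which it is either \emph{hooking} or \emph{hooked}; I assign each transition a type $H$ or $D$ accordingly (choosing one if both are possible). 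Define multigraphs $T_H,T_D$ on the set of components: each $H$-transition contributes an edge $\{\CC(q_i),\CC(q_{i+1})\}$ to $T_H$, analogously for $D$ in $T_D$.

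By Lemma~\ref{lem:2} (again swapped), no two transitions of the same type can span four pairwise distinct components, so each of $T_H,T_D$ has matching number at most~$1$; by a standard characterization, the underlying simple graph is then a star or a triangle. Let $S_H,S_D$ denote the ``hub'' vertex sets (the centre of the star, or the three vertices of the triangle), so $|S_H|,|S_D|\le 3$. Every $H$-transition has at least one endpoint in $S_H$ (and \emph{both} endpoints if $T_H$ is a triangle), and analogously for $D$. Consequently, if the component of $q_i$ lies outside $S_H\cup S_D$ and position~$i$ is a transition, then $\CC(q_{i+1})\in S_H\cup S_D$; so the ``outside'' $Q$-sides (those whose component is not in $S_H\cup S_D$) form arcs contained in a single component and flanked by ``inside'' $Q$-sides. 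Writing $r$ for the number of such outside arcs, this cyclic interleaving yields $n\ge 2r$ and $c_Q\le r+|S_H\cup S_D|$.

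I finish by a case analysis on the shapes of $T_H$ and $T_D$. If both are stars, $|S_H\cup S_D|\le 2$ and one gets $c_Q\le (n+4)/2$ at once. If exactly one of them, say $T_D$, is a triangle, then the three triangle edges force at least one ``$D$-block'' of consecutive $D_i$-arcs whose internal $D$-transitions consume additional $Q$-sides; a careful counting sharpens the inequality to the form $n\ge 2r+2b+c^*$ with $b\ge 1$ and a small constant $c^*$ coming from the block length, giving $c_Q\le (n+3)/2$. The main obstacle is the \emph{triangle--triangle} case. I plan to argue by contradiction that the two triangles' vertex sets must share at least one component: otherwise every transition would be confined to one of the two triangles, so no transition could either connect the triangles or reach any outside component, contradicting the connectivity of the cyclic $Q$-sequence. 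Thus $|S_H\cup S_D|\le 5$ and no outside arc can exist, so $c_Q\le 5\le(n+5)/2$ for $n\ge 5$; the case $n=3$ is immediate from $c_Q\le n$. Combining all cases yields the bound $c\le(n+5)/2$.
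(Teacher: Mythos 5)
Your approach is genuinely different from the paper's. The paper partitions the $Q$-sides into $(n-1)/2$ disjoint consecutive pairs (dropping one side), splits these pairs into ``hooking'' and ``hooked'' classes, and applies Lemma~\ref{lem:2} within each class to show that each additional pair contributes at most one new component, yielding the bound in two lines of counting. You instead look at the cyclic sequence of components $\CC(q_0),\dots,\CC(q_{n-1})$, build two multigraphs $T_H,T_D$ on the component set out of the transitions, and exploit the fact that a graph with matching number at most $1$ is a star or a triangle. The ``both stars'' case and the ``triangle--triangle'' case of your argument are correct (the connectivity argument for the triangle--triangle case is a nice touch, although with $r=0$ already $c_Q\le 6\le(n+5)/2$ holds for $n\ge 7$ and the small cases are trivial, so the sharing argument is not strictly needed). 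The ingredients you use --- Lemma~\ref{lem:1} to associate a $P$-pair to each transition, Lemma~\ref{lem:2} to cap the matching number, and the cyclic-sequence bookkeeping --- are sound, and the inequality $c_Q\le r+|S_H\cup S_D|$ with $n\ge 2r$ is correctly derived.

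However, the ``one triangle, one star'' case is a genuine gap. There you only get $|S_H\cup S_D|\le 4$, so $c_Q\le r+4\le(n-1)/2+4=(n+7)/2$ from what you have actually established, which overshoots the target by one. Your claimed fix --- a ``$D$-block'' argument producing $n\ge 2r+2b+c^*$ with $b\ge1$ and an unspecified constant $c^*$ --- introduces undefined notions ($D_i$-arcs, $D$-blocks, $b$, $c^*$) and never justifies the stated inequality, so it is not a proof. The case does appear salvageable along your lines (for example, when $T_D$ is a genuine triangle the components $u,v,w,s$ all carry $Q$-sides, every transition to an outside component must pass through the star center $s$, and tallying the sides consumed by the inside stretches that visit $u$, $v$, $w$ yields roughly $n\ge 2r+4$ when $r\ge1$, with $r=0$ trivial), but you would need to write this out carefully, and the vague inequality you state does not substitute for it.
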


\begin{proof}
Partition the sides $q_0,q_1,\ldots,q_{n-1}$
  of $Q$ into $(n-1)/2$ disjoint pairs
$q_{2i},q_{2i+1}$, discarding
the last side $q_{n-1}$.
Let $H_+$ denote the subset of these pairs that are hooked.
Suppose first that this set contains some pair
$q_{2i_0},q_{2i_0+1}$ of sides that are in two different connected
components.
Combining
 $q_{2i_0},q_{2i_0+1}$ with
any of the remaining pairs
$q_{2i},q_{2i+1}$ of $H_+$,
Lemma~\ref{lem:2} tells us that
the sides $q_{2i}$ and $q_{2i+1}$ 
must either belong to the same connected component,
or one of
them must belong to
$\CC(q_{2i_0})$ or $\CC(q_{2i_0+1})$.
In other words, each
 remaining pair contributes at most one ``new'' connected component, and
it follows that the sides in
$H_+$
belong to at most $|H_+|+1$ connected components.
 This conclusion holds
also in the case that $H_+$ contains no
pair
$q_{2i_0},q_{2i_0+1}$ of sides that are in different connected
components.

The same argument works for the complementary subset $H_-$ of
pairs that are not hooked, but hooking.
Along with $\CC(q_{n-1})$ there
are at most
$(|H_+|+1)+(|H_-|+1)+1=(n-1)/2+3 = (n+5)/2$ components.
\end{proof}

Together with Observation~\ref{obs:CCs}, this already
improves the previous bound
$mn-(m + \lceil \frac{n}{6} \rceil)$ 
for a large range of parameters, namely when $m\ge n\ge 11$:
\begin{proposition}\label{weak}
Let $P$ and $Q$ be simple polygons with $m$ and $n$ sides, respectively,
such that $m$ and $n$ are odd and $m \ge n \ge 3$.
Then there are at most $mn-(m+ \frac{n-5}2)$ intersection points between
$P$ and $Q$.
\qed
\end{proposition}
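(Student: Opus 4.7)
The plan is essentially to observe that Proposition~\ref{weak} is an immediate arithmetic consequence of Lemma~\ref{cor:nCC} combined with Observation~\ref{obs:CCs}, so no new geometric ideas are needed.

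First I would invoke Lemma~\ref{cor:nCC}, which bounds the number of connected components of the disjointness graph $G\NI$ by $C := (n+5)/2$. Since $n$ is odd, this quantity is an integer. Then I would apply Observation~\ref{obs:CCs}, which states that if $G\NI$ has at most $C$ connected components then there are at most $mn-(m+n)+C$ intersection points between the boundaries of $P$ and $Q$.

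It remains only to simplify. Substituting $C = (n+5)/2$ yields
\[
mn-(m+n)+\frac{n+5}{2} \;=\; mn - m - n + \frac{n+5}{2} \;=\; mn - m - \frac{n-5}{2} \;=\; mn - \Bigl(m + \frac{n-5}{2}\Bigr),
\]
which is exactly the claimed bound. There is no real obstacle here; the work has been done in Lemmas~\ref{lem:1}, \ref{lem:2}, and~\ref{cor:nCC}. One minor sanity check worth mentioning is the comparison with the previous best bound $mn-(m+\lceil n/6\rceil)$: the improvement $(n-5)/2 > \lceil n/6\rceil$ holds whenever $n \ge 11$, justifying the paper's remark that Proposition~\ref{weak} improves the state of the art in this range.
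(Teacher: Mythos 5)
Your proposal is correct and matches the paper's intent exactly: Proposition~\ref{weak} is stated with an immediate \qed because it is the direct arithmetic combination of Lemma~\ref{cor:nCC} (at most $(n+5)/2$ components) with Observation~\ref{obs:CCs}, precisely as you argue. Your simplification and the sanity check against the prior bound $mn-(m+\lceil n/6\rceil)$ for $n\ge 11$ are both correct.
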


\section{Ramsey-Theoretic Tools}
\label{sec:Ramsey}

We recall some classic results.

A tournament is a directed graph that contains
between every pair of nodes $x,y$ either the arc $(x,y)$ or the arc
$(y,x)$ but not both.
A tournament is \emph{transitive} if for every three nodes $x,y,z$
the existence of the arcs $(x,y)$ and $(y,z)$ implies the existence of
the arc $(x,z)$. Equivalently, the nodes can be ordered on a line
such that all arcs are in the same direction.
The following is easy to prove by induction.
\begin{lemma}
[Erd\H os and Moser~\cite{Ramsey-tournaments}]
  \label{lem:sub-tournament}
Every tournament on a node set $V$ contains a transitive sub-tournament on $1+\lfloor\log_2|V|\rfloor$ nodes.
\end{lemma}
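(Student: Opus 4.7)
The plan is a straightforward induction on $n=|V|$, using the pigeonhole principle at each step to halve the problem size while losing only one vertex from the chain.

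For the base case $n=1$, a single vertex is a transitive sub-tournament on $1=1+\lfloor\log_2 1\rfloor$ nodes, so the claim holds. For the inductive step, fix any vertex $v\in V$. Its out-neighborhood $V^+(v)$ and in-neighborhood $V^-(v)$ partition $V\setminus\{v\}$, so by pigeonhole one of them has size at least $\lceil (n-1)/2\rceil\ge \lfloor n/2\rfloor$. Call this set $U$, and note that the tournament induced on $U$ is itself a tournament on $\lfloor n/2\rfloor$ (or more) nodes, and that $v$ is connected to every node of $U$ by arcs all pointing the same way (all out of $v$, or all into $v$).

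Applying the induction hypothesis to the induced tournament on $U$, we obtain a transitive sub-tournament on at least $1+\lfloor\log_2\lfloor n/2\rfloor\rfloor$ nodes, which we may write as an ordered chain $u_1,u_2,\ldots,u_k$ with all arcs going forward. If every arc between $v$ and $U$ points out of $v$, then prepending $v$ to this chain produces a transitive sub-tournament on $k+1$ nodes; if they all point into $v$, then appending $v$ gives the same conclusion. In either case we have produced a transitive sub-tournament on $2+\lfloor\log_2\lfloor n/2\rfloor\rfloor$ nodes.

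The only remaining task is an arithmetic check that $2+\lfloor\log_2\lfloor n/2\rfloor\rfloor\ge 1+\lfloor\log_2 n\rfloor$ for every $n\ge 2$, which follows because $\lfloor n/2\rfloor\ge 2^{\lfloor\log_2 n\rfloor-1}$. I do not expect any real obstacle here; the argument is entirely standard and the only care needed is to verify the floor-function bookkeeping so that the final count matches $1+\lfloor\log_2|V|\rfloor$ exactly rather than something slightly weaker.
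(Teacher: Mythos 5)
Your proof is correct and follows essentially the same approach as the paper's: pick an arbitrary vertex, pass by pigeonhole to the larger of its in- or out-neighborhood, recurse, and extend the resulting transitive chain by one vertex. You have merely spelled out the induction and the floor-function bookkeeping that the paper leaves implicit.
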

\begin{proof}
  Choose $v\in V$ arbitrarily, and let $N\subseteq V-\{v\}$ with
  $|N|\ge (|V|-1)/2$ be the set
of in-neighbors of $v$ or the set of out-neighbors of $v$, whichever
is larger. Then $v$ together with a transitive sub-tournament of $N$
gives a transitive sub-tournament of size one larger.
\end{proof}

A set of points $p_1,p_2,\ldots,p_r$ in the plane sorted by
$x$-coordinates
(and with distinct $x$-coordinates)
forms an \emph{$r$-cup} (resp., \emph{$r$-cap})  
if $p_i$ is below (resp., above) the line through $p_{i-1}$ and $p_{i+1}$ for every $1 < i < r$.

\begin{theorem}[Erd\H{o}s--Szekeres Theorem for caps and cups in point sets
  \cite{ES35}]\label{thm:ES-points}
   For any two integers $r\ge2$ and $s\ge2$,
   the value $ES(r,s):= \binom{r+s-4}{r-2}$
   fulfills the following
  statement\textup:

  Suppose that
  $P$ is a set of $ES(r,s)+1$ points in the plane with distinct
  $x$-coordinates such that
  no three points of $P$
  lie on a line.  Then $P$ contains an $r$-cup or an $s$-cap.

Moreover,
$ES(r,s)$
is the smallest value that fulfills the statement.
\qed
\end{theorem}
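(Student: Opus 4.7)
The plan is to prove both the upper bound (sufficiency of $ES(r,s)+1$ points) and tightness by induction on $r+s$, using Pascal's identity $\binom{r+s-4}{r-2} = \binom{r+s-5}{r-3} + \binom{r+s-5}{r-2}$. Setting $f(r,s) := ES(r,s)+1$, this becomes the recurrence $f(r,s) = f(r-1,s) + f(r,s-1) - 1$. The base cases $r=2$ and $s=2$ are immediate, since any two points in general position form both a $2$-cup and a $2$-cap, matching $\binom{s-2}{0}=1$.

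For the upper bound, I would suppose $P$ is a set of $f(r,s)$ points in general position containing neither an $r$-cup nor an $s$-cap. For each $p \in P$, let $c(p)$ denote the length of the longest cup in $P$ ending at $p$, and set $T = \{\,p \in P : c(p) \ge r-1\,\}$. Since $|T|+|P\setminus T| = f(r-1,s) + f(r,s-1) - 1$, pigeonhole forces either $|T| \ge f(r,s-1)$ or $|P\setminus T| \ge f(r-1,s)$. In the latter case the inductive hypothesis on $P\setminus T$ produces either an $s$-cap (a contradiction) or an $(r-1)$-cup whose right endpoint $p$ then satisfies $c(p)\ge r-1$, contradicting $p \notin T$. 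In the former case the inductive hypothesis on $T$ produces an $(s-1)$-cap $q_1,\ldots,q_{s-1}$ in $T$; pick any cup $p_1,\ldots,p_{r-2},q_1$ witnessing $c(q_1)\ge r-1$. Compare the slope of $p_{r-2}q_1$ with the slope of $q_1q_2$: if the slopes are increasing at $q_1$, then $p_1,\ldots,p_{r-2},q_1,q_2$ is an $r$-cup; if they are decreasing at $q_1$, then $p_{r-2},q_1,\ldots,q_{s-1}$ is an $s$-cap. Either outcome contradicts the assumption on $P$.

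For tightness, I would construct inductively, for each $(r,s)$, a point set $X(r,s)$ of size $\binom{r+s-4}{r-2}$ in general position avoiding both $r$-cups and $s$-caps. Given extremal witnesses $X(r-1,s)$ and $X(r,s-1)$, place a rescaled copy of $X(r,s-1)$ far to the right of $X(r-1,s)$, with vertical offsets chosen so that every cross-segment between the two parts is steeper than every segment internal to the left part and shallower than every segment internal to the right part. A short check then shows that any cup in the union can use at most one ``crossing'' point, so it is essentially contained in one of the two extremal sets and hence has length less than $r$; the argument for caps is symmetric. The cardinalities add to $\binom{r+s-5}{r-3}+\binom{r+s-5}{r-2}=\binom{r+s-4}{r-2}$ by Pascal.

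The hard part will be the slope-comparison gluing step: one must verify that the two complementary slope configurations at $q_1$ dispatch cleanly to the two forbidden patterns ($r$-cup versus $s$-cap), and analogously that in the tightness construction no hybrid cup or cap can sneak across the boundary between the two scaled parts using more than one crossing point. The remaining ingredients --- the pigeonhole split of $P$ into $T$ and $P\setminus T$, and the recursive stacking of extremal constructions --- are essentially forced by the Pascal decomposition of the binomial coefficient.
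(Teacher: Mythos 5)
The paper does not prove this theorem; it cites Erd\H{o}s and Szekeres \cite{ES35} directly (the theorem ends with a \qed\ and no proof follows), and in fact only the sufficiency direction is used later. So there is no in-paper argument to compare against, and your proposal stands on its own.

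Your upper-bound argument is the standard inductive proof and is correct: the split into $T$ and $P\setminus T$, the pigeonhole step, and the slope comparison at $q_1$ all check out. (The two outcomes of the slope comparison do dispatch cleanly: if $\mathrm{slope}(p_{r-2},q_1)<\mathrm{slope}(q_1,q_2)$ the concatenation $p_1,\dots,p_{r-2},q_1,q_2$ is an $r$-cup, and if the inequality is reversed then $p_{r-2},q_1,\dots,q_{s-1}$ is an $s$-cap; general position rules out equality.)

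The tightness construction, however, has the slope arrangement backwards, and this is a genuine error rather than a loose end. You stipulate that cross-segments should be steeper than all segments inside the left copy $X(r-1,s)$ and shallower than all segments inside the right copy $X(r,s-1)$, i.e.\ that cross-slopes lie strictly between the two internal slope ranges. With that arrangement, a cup can pick up many points from both copies: if a cup has $j\ge 2$ points in the left copy and $k\ge 2$ in the right copy, the slope sequence passes from the low left-slopes, through an intermediate cross-slope, into the high right-slopes, and it is still monotonically increasing, so the cup condition is never violated at the junction. Consequently a cup can have up to $(r-3)+(r-1)$ points, which exceeds $r-1$ once $r\ge 4$. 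Already for $r=s=4$ one can realize an explicit $4$-cup in the union of a $3$-cap on the left and a $3$-cup on the right under your slope constraints. The correct arrangement makes the cross-slopes \emph{dominate} both internal slope ranges (equivalently: every point of the right copy lies above every line spanned by the left copy, \emph{and} every line spanned by the right copy lies above every point of the left copy). Then a cup that crosses the boundary can contain at most one point of the right copy, giving at most $(r-3)+1=r-2<r$ points together with the left part (or at most $r-1$ if confined to the right copy), and dually a cap that crosses can contain at most one point of the left copy. With the intermediate-slope version as written, the claim that ``any cup in the union can use at most one crossing point'' is false, so the tightness half of the theorem is not established.
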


A similar statement holds for lines by the standard point-line duality.
A set of lines $\ell_1,\ell_2,\ldots,\ell_r$ sorted by slope
forms an \emph{$r$-cup}
(resp., \emph{$r$-cap})
if
$\ell_{i-1}$ and $\ell_{i+1}$ intersect
below (resp., above) $\ell_i$
for every $1 < i < r$.

\begin{theorem}[Erd\H{o}s--Szekeres Theorem for lines]\label{thm:ES-lines}
For the numbers $ES(r,s)$ from Theorem~\ref{thm:ES-points}, the
following statement holds
for any two integers $r\ge2$ and $s\ge2$\textup:

	Suppose that $L$ is a set of $ES(r,s)+1$ non-vertical lines in
        the plane no two of which are parallel and
        no three of which intersect at a common point.
	Then $L$ contains an $r$-cup or an $s$-cap.
        \qed
\end{theorem}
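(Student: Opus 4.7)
My plan is to derive Theorem~\ref{thm:ES-lines} from Theorem~\ref{thm:ES-points} via the standard point--line duality. I would use the involution that sends a non-vertical line $\ell : y = ax - b$ to the point $\ell^* := (a,b)$ (and conversely sends the point $(a,b)$ to the line $y = ax - b$).

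The first step is to verify that the hypotheses transfer cleanly. Let $L$ be a set of $ES(r,s)+1$ non-vertical lines with no two parallel and no three concurrent. Then $L^* := \{\ell^* \mid \ell \in L\}$ is a set of $ES(r,s)+1$ points; non-parallelism of the lines of $L$ translates to distinct $x$-coordinates among the points of $L^*$, and three lines being non-concurrent translates, by a one-line calculation in duality, to three dual points being non-collinear. Thus $L^*$ satisfies the hypotheses of Theorem~\ref{thm:ES-points}. Moreover, ordering the lines of $L$ by slope matches ordering the points of $L^*$ by $x$-coordinate.

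The second step is to check that the duality preserves cups and caps. Given three lines $\ell_1, \ell_2, \ell_3$ sorted by slope, let $(x^*, y^*)$ denote $\ell_1 \cap \ell_3$. Comparing $\ell_2(x^*)$ with $y^*$ by a short algebraic computation shows that $\ell_2$ lies below $(x^*, y^*)$ if and only if the dual point $\ell_2^*$ lies above the line through $\ell_1^*$ and $\ell_3^*$. Hence three lines form a $3$-cap (intersection of outer two above the middle one) exactly when the three dual points form a $3$-cap in the point sense (middle point above the line through the outer two), and similarly for $3$-cups. Applied to every consecutive triple in a sequence sorted by slope (respectively by $x$-coordinate), this yields: an $r$-cup (resp.\ $s$-cap) of lines in $L$ corresponds precisely to an $r$-cup (resp.\ $s$-cap) of points in $L^*$.

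Finally, I would apply Theorem~\ref{thm:ES-points} to the point set $L^*$ to obtain an $r$-cup or an $s$-cap among the dual points, and then transport this back through the duality to obtain an $r$-cup or an $s$-cap among the lines of~$L$, which is the desired conclusion. The only delicate point in the argument is keeping track of the sign in the above/below correspondence, which is a routine computation; I do not anticipate any genuine obstacle.
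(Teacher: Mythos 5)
Your proposal is correct and uses exactly the approach the paper has in mind: the paper states Theorem~\ref{thm:ES-lines} with no separate proof, saying only that ``a similar statement holds for lines by the standard point--line duality,'' and you have spelled out that duality (the map $y = ax - b \leftrightarrow (a,b)$) together with the routine verification that it preserves general position and sends caps to caps and cups to cups. In particular your sign bookkeeping is right: $\ell_1\cap\ell_3$ above $\ell_2$ (a $3$-cap of lines) corresponds to $\ell_2^*$ above the line through $\ell_1^*,\ell_3^*$ (a $3$-cap of points).
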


\begin{theorem}[Erd\H{o}s--Szekeres Theorem for monotone subsequences
\cite{ES35}]
  \label {thm:monotone}
For any integer $r\ge0$, a sequence of $r^2+1$ distinct numbers contains
either an increasing subsequence of length $r+1$
or a decreasing subsequence of length $r+1$.
        \qed
\end{theorem}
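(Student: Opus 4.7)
The plan is to give the standard pigeonhole argument due to Seidenberg. The idea is to tag each term of the sequence with a pair of positive integers recording, respectively, the longest increasing and longest decreasing subsequence ending there, and then to argue that two different terms must get two different tags.

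More concretely, let $a_1,a_2,\dots,a_{r^2+1}$ be the given sequence of distinct numbers. For each index $i$, define $f(i)$ to be the length of the longest increasing subsequence of $a_1,\dots,a_i$ ending at $a_i$, and $g(i)$ to be the length of the longest decreasing subsequence of $a_1,\dots,a_i$ ending at $a_i$. Both $f(i),g(i)\ge 1$ since the one-element subsequence $(a_i)$ is simultaneously increasing and decreasing. I would then prove the key injectivity claim: for $i<j$, we have $(f(i),g(i))\ne (f(j),g(j))$. Since the $a_k$ are distinct, either $a_i<a_j$, in which case appending $a_j$ to a longest increasing subsequence ending at $a_i$ yields $f(j)\ge f(i)+1$; or $a_i>a_j$, in which case analogously $g(j)\ge g(i)+1$. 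In either case the pairs differ.

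To finish, I would argue by contradiction: suppose the sequence contains no increasing subsequence of length $r+1$ and no decreasing subsequence of length $r+1$. Then $f(i),g(i)\in\{1,2,\dots,r\}$ for every $i$, so the pair $(f(i),g(i))$ takes at most $r^2$ possible values. By the injectivity established above, the map $i\mapsto (f(i),g(i))$ is injective on $\{1,\dots,r^2+1\}$, giving $r^2+1$ distinct pairs drawn from a set of size $r^2$, a contradiction by the pigeonhole principle. Hence some $f(i)$ or some $g(i)$ equals $r+1$, yielding the desired monotone subsequence.

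There is no serious obstacle here; this is the classical one-page argument. The only tiny points to handle carefully are the edge case $r=0$ (where the statement is vacuous, since any one-term sequence trivially contains a length-$1$ monotone subsequence), and the use of distinctness of the $a_k$ to guarantee the strict comparison $a_i<a_j$ or $a_i>a_j$ needed to extend one of the monotone subsequences.
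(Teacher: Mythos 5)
Your argument is correct: this is the standard Seidenberg pigeonhole proof, with the injectivity of $i\mapsto(f(i),g(i))$ established exactly as you state and the contradiction drawn from the $r^2$ available pairs. The paper itself does not supply a proof of this classical theorem; it merely cites Erd\H{o}s--Szekeres~\cite{ES35} and closes the statement with \qed, so there is no in-paper argument to compare against. For what it is worth, the original 1935 proof of Erd\H{o}s and Szekeres proceeds by a double induction rather than the tagging-and-pigeonhole argument you give (which is usually attributed to Seidenberg), but your version is the shorter and more commonly presented one and is perfectly valid.
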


\section{Proof of Theorem~\ref{thm:main}}
\label{subsec:main-proof}
\subsection{Imposing More Structure on the Examples}
Going back to the proof of Theorem~\ref{thm:main}, recall that 
in light of Observation~\ref{obs:CCs} it is enough to prove that
$G\NI$, the disjointness graph of $P$ and $Q$, has at most constantly many connected components.

We will use the following constants:
$C_6 := 6$;
$C_5 := (C_6)^2+1=37$;
$C_4  := ES(C_5,C_5)+1
= \binom{70}{35}+1=
112{,}186{,}277{,}816{,}662{,}845{,}433 
<
2^{70}
$;
$C_3 := 2^{C_4-1}$;
$C_2 := C_3+5$;
$C_1 := 8C_2$;
$C := C_1-1<2^{2^{70}}$.








We claim that $G\NI$ has at most $C$ connected components.
Suppose that $G\NI$ has at least $C_1=C+1$ connected components,
numbered as $1,2,\ldots,C_1$.
For each connected component~$j$,
we find
two
consecutive sides $q_{i_j},q_{i_j + 1}$ of $Q$ such that
$\CC(q_{i_j})=j$ and $\CC(q_{i_j + 1})\ne j$.
We call $q_{i_j}$ the \emph{primary} side and $q_{i_j + 1}$ the \emph{companion}
side of the pair.
 We take these $C_1$ consecutive pairs in their cyclic order along $Q$
 and remove every second pair.
 This ensures that the remaining $C_1/2$ pairs are disjoint, in the
 sense
 that no side of $Q$ belongs to two different pairs.

We apply Lemma~\ref{lem:1} to each of the remaining $C_1/2$ pairs
$q_{i_j},q_{i_j+ 1}$
and find an associated pair $p_{k_j},p_{k_j \pm 1}$ such that $(q_{i_j},p_{k_j}), (q_{i_j+1},p_{k_j\pm 1}) \in E\NI$.
Therefore, $\CC(q_{i_j})=\CC(p_{k_j})$ and
$\CC(q_{i_j + 1})=\CC(p_{k_j\pm 1})\ne\CC(q_{i_j})$.
Again, we call $p_{k_j}$ the {primary} side and $p_{k_j \pm 1}$ the
{companion} side.
As before, we delete half of the pairs $p_{k_j},p_{k_j \pm 1}$
in cyclic order along~$P$, along with their associated pairs from $Q$, and thus we ensure that
the remaining $C_1/4$ pairs are disjoint also on $P$.

At least $C_1/8$ of the remaining pairs $q_{i_j},q_{i_j + 1}$ are
hooking or at least $C_1/8$ of them are hooked. We may assume that at
least $C_2 = C_1/8$ of the pairs $q_{i_j},q_{i_j + 1}$ are hooking with
respect to their associated pair, $p_{k_j},p_{k_j \pm 1}$, for
otherwise, $p_{k_j},p_{k_j\pm1}$ is hooking with respect to
$q_{i_j},q_{i_j + 1}$ and we may switch the roles of $P$ and~$Q$.
Let us denote by $Q_2$ the set of $C_2$ hooking consecutive pairs
$(q_{i_j},q_{i_j \pm 1})$ at which we have arrived.
(Because of the potential switch, we have to denote the companion side
by
$q_{i_j \pm 1}$ instead of $q_{i_j + 1}$ from now on.)

By construction,
all $C_2$ primary sides $q_{i_j}$ of these pairs belong to
  distinct components.
We now argue that all $C_2$ adjacent companion sides $q_{i_j\pm1}$
 with at most one exception
lie in the same
connected component, provided that $C_2\ge 4$.

We model the problem by a graph whose
nodes are the connected components of $G\NI$.
For each of the $C_2$ pairs
$q_{i_j},q_{i_j\pm1}$,
we insert an edge between
$\CC(q_{i_j})$ and $\CC(q_{i_j\pm1})$.
The result is a multigraph with $C_2$ edges and without loops.
Two disjoint edges would represent two consecutive pairs
of the form $(q_{i_j}
,q_{i_j\pm1})$ whose four sides are
in four distinct connected components, but this
is a contradiction to
Lemma~\ref{lem:2}.
Thus, the graph has no two disjoint edges, and
such graphs 
are easily classified:
they are the triangle (cycle 
on three vertices) and the star graphs $K_{1t}$, possibly
with multiple edges. Overall, the graph involves
 at least $C_2\ge4$ distinct connected components
$\CC(q_{i_j})$, and therefore the triangle graph is excluded.
Let $v$ be the central vertex of the star.
There can be at most one $j$ with
$\CC(q_{i_j})=v$, and we discard it.
All other sides $q_{i_j}$ have
$\CC(q_{i_j})\ne v$, and therefore
$\CC(q_{i_j\pm1})$ must be the other endpoint of the edge, that is,~$v$.

In summary, we have found $C_2-1$ adjacent pairs
$q_{i_j},q_{i_j \pm 1}$ with the following properties.
\begin{itemize}
\item The primary sides $q_{i_j}$ belong to
   $C_2-1$ distinct components.
\item All companion sides $q_{i_j\pm1}$ belong to the same component, distinct
  from the other   
   $C_2-1$ components.
 \item
All $2C_2-2$ sides of the pairs
   $q_{i_j},q_{i_j \pm 1}$ 
   are distinct. 
\item Each
  $q_{i_j},q_{i_j \pm 1}$ is hooking with respect to an associated pair
  $p_{k_j},p_{k_j\pm1}$.
 \item
All $2C_2-2$ sides of the pairs
$p_{k_j},p_{k_j\pm1}$  
   are distinct. 
\end{itemize}

Let us denote by $\chosensides$
the set of
$C_2-1$
sides $q_{i_j}$.

\begin{proposition}\label{prop:no-many-avoiding}
  There are no six distinct sides
$q_a,q_b, q_c, q_d, q_e, q_f$
among the
$C_2-1$
sides $q_{i_j}\in\chosensides$ 
such that
$q_a,q_b$ are avoiding or consecutive,
$q_c,q_d$ are avoiding or consecutive,
and $q_e,q_f$ are avoiding or consecutive.
\end{proposition}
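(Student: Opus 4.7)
My proof plan is to reduce the three pairs of primary $Q$-sides to three consecutive pairs of sides of $P$ via Lemma~\ref{lem:1}, and then to use pigeonhole on their hooking/hooked orientations together with Lemma~\ref{lem:2} to obtain a contradiction. So first I would suppose for contradiction that six distinct primary sides $q_a,q_b,q_c,q_d,q_e,q_f\in\chosensides$ exist satisfying the hypotheses of the proposition. Since all primary sides lie in pairwise distinct connected components of $G\NI$, each of the three pairs $(q_a,q_b),(q_c,q_d),(q_e,q_f)$ consists of two sides in different components. Applying Lemma~\ref{lem:1} (with $P$ and $Q$ interchanged, which is allowed by the symmetry of the lemma) to each pair, I obtain three consecutive pairs of sides of $P$, which I would denote $(p^{(1)}_a,p^{(1)}_b)$, $(p^{(2)}_c,p^{(2)}_d)$, and $(p^{(3)}_e,p^{(3)}_f)$, with $\CC(p^{(1)}_a)=\CC(q_a)$, $\CC(p^{(1)}_b)=\CC(q_b)$, and analogously for the other two. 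Because $q_a,\dots,q_f$ occupy six distinct connected components, so do these six sides of $P$; in particular the three $P$-pairs are pairwise vertex-disjoint.

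Next, each of the three consecutive $P$-pairs, being consecutive in $P$ with its two sides in distinct components, has its own associated consecutive pair of sides of $Q$ by a second application of Lemma~\ref{lem:1} (in its original direction), and with respect to that associated pair it is either \emph{hooking} or \emph{hooked} in the sense of the definition that follows Lemma~\ref{lem:1}. By pigeonhole, at least two of the three $P$-pairs, say $(p^{(1)}_a,p^{(1)}_b)$ and $(p^{(2)}_c,p^{(2)}_d)$, share the same orientation. Their four sides lie in the pairwise distinct components $\CC(q_a),\CC(q_b),\CC(q_c),\CC(q_d)$, so all hypotheses of Lemma~\ref{lem:2} are met, and that lemma furnishes the required contradiction.

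The main subtlety I would need to keep straight is that the hooking/hooked status used to invoke Lemma~\ref{lem:2} is the intrinsic one coming from the \emph{second} application of Lemma~\ref{lem:1}, and is in general different from whatever geometric relationship holds between the $P$-pair and the original pair $(q_a,q_b)$ from which that $P$-pair was extracted. This causes no real obstruction, because the pigeonhole step only requires that each $P$-pair carry \emph{some} well-defined orientation; but the two appearances of Lemma~\ref{lem:1} should be distinguished cleanly in the write-up, and one should verify that distinctness of components transfers correctly through both applications so that the full hypotheses of Lemma~\ref{lem:2}, namely two consecutive $P$-pairs with four sides in four distinct components and a common orientation, are genuinely met.
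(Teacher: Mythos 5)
Your proposal is correct and matches the paper's argument. The paper's own proof applies Lemma~\ref{lem:1} (with $P$ and $Q$ swapped) to each of the three pairs, obtains three consecutive pairs of sides of $P$ in six distinct components, invokes pigeonhole on their hooking/hooked status, and concludes by Lemma~\ref{lem:2}; the only difference is that the paper leaves the second application of Lemma~\ref{lem:1} (which endows each consecutive $P$-pair with an associated consecutive $Q$-pair and hence a hooking/hooked orientation in the defined sense) implicit, whereas you spell it out explicitly, which is a reasonable and helpful clarification.
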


\begin{proof}
  Suppose for contradiction that there are six such
  sides. 
  It follows from Lemma~\ref{lem:1} that there are two consecutive
  sides $p_{a'}$ and $p_{b'}$ of $P$ such that $\CC(p_{a'})= \CC(q_a)$
  and $\CC(p_{b'})= \CC(q_b)$.

  Similarly, we find
  a pair of consecutive sides $p_{c'}$ and $p_{d'}$ of $P$
  such that
  $
  \CC(p_{c'})= \CC(q_c)$ and
  $
  \CC(p_{d'})= \CC(q_d)$, and the same story for $e$
  and~$f$.
  By the pigeonhole principle, two of the three consecutive pairs
$(p_{a'},p_{b'})$,
$(p_{c'},p_{d'})$,
$(p_{e'},p_{f'})$
  are
hooking or two of them are hooked. This contradicts Lemma~\ref{lem:2}.
\end{proof}

Define a complete graph whose nodes are
the $C_2-1$ sides
$q_{i_j}\in\chosensides$,
and color an edge $(q_{i_j},q_{i_k})$ red if
$q_{i_j}$ and $q_{i_k}$ are avoiding or consecutive and blue otherwise.
Proposition~\ref{prop:no-many-avoiding}
says that this graph contains no red matching of size three.
This means that we can get rid of all red edges by removing at most 4 nodes.
To see this, pick any red edge and remove its two nodes from the graph.
If any red edge remains, remove its two nodes.
Then all red edges are gone, because otherwise we would find
a matching with three red edges.

We conclude that there is a blue clique of size
 $C_3=C_2-5$, i.e.,
 there is a set
 $Q_3\subset\chosensides$ of
 $C_3$ polygon sides among
the $C_2-1$ sides
 $q_{i_j}\in\chosensides$
 that are pairwise non-avoiding
and disjoint,
i.e., they do not
share a common endpoint.

Our next goal is to find a subset of 7 segments in $Q_3$ that are
arranged as in Figure~\ref{fig:six-sides}.  To define this precisely, we say for
 two segments $q$ and $q'$ that
$q$ \emph{stabs} $q'$ if $I(q,q') \in q'$.
Among any two non-avoiding and non-consecutive sides $q$ and $q'$,
either
$q$ {stabs} $q'$ or
$q'$ {stabs}~$q$, but not both.
Define a tournament $T$ whose nodes are
the $C_3$ sides
$q_{i_j}\in Q_3$,
and
the arc between 
 each pair of nodes is oriented towards the stabbed side.
It follows from Lemma~\ref{lem:sub-tournament} that $T$ has a
transitive sub-tournament of size $1+\lfloor\log_2 C_3\rfloor=C_4$.

Furthermore, since
$C_4  = ES(C_5,C_5)+1$,
it follows from Theorem~\ref{thm:ES-lines}
that
there is a subset of
$C_5$ sides such that the lines through them form a $C_5$-cup or a $C_5$-cap.
By a vertical
reflection
if needed,
we may assume that they
 form a $C_5$-cup.





We now reorder
these $C_5$ sides $q_{i_j}$ of $Q$ in stabbing order, according to the
transitive sub-tournament mentioned above.
By the
Erd\H{o}s--Szekeres Theorem on monotone subsequences
(Theorem~\ref {thm:monotone}),
there is a subsequence of size
$C_6+1=\sqrt{C_5-1}+1=7$ such that their slopes
form a monotone sequence.
By a 
horizontal
reflection
if needed, 
we may assume that they have
decreasing slopes.

We rename these $7$ segments 
to
$a_0,a_1,\ldots,a_6$,
and we denote the line $\ell(a_i)$ by $\ell_i$,
see
Figure~\ref{fig:six-sides}.
We have achieved the following properties:
\begin{itemize}
\item The lines $\ell_0,\ldots,\ell_6$ form a 7-cup,
  with decreasing slopes in this order.
\item The segments $a_i$ are pairwise disjoint and non-avoiding.
\item 
  $a_{i}$ stabs $a_{j}$ for every $i<j$.
\end{itemize}

\begin{figure}
	\centering
	\includegraphics[page=3]{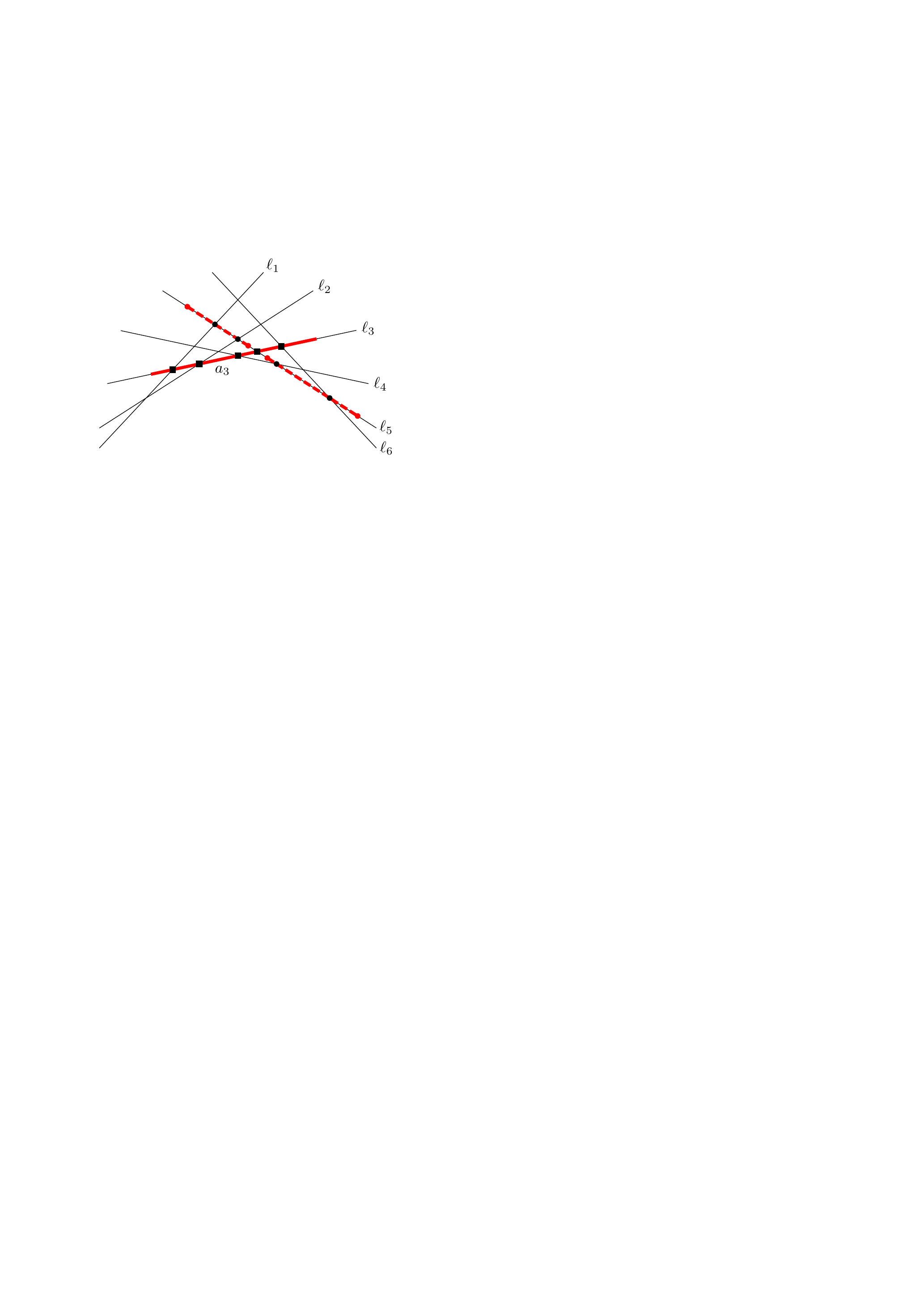}
	\caption{The seven sides $a_0,a_1,\ldots,a_6$. The lines
          $\ell_0,\ldots,\ell_6$ form a 7-cup.}
	\label{fig:six-sides}
\end{figure} 


These properties allow $a_0$
 to lie between any two consecutive intersections on
$\ell_0$. 
There is no such flexibility
for
the other sides:
Every side $a_j$ is stabbed by every preceding side $a_i$. 
For $1\le i<j$, $a_i$ cannot stab $a_j$ from the right,
because then $a_0$ would not be able to stab~$a_i$.
Hence,
the arrangement of the  sides
$a_1,\ldots,a_6$
must
be
exactly as shown in Figure~\ref{fig:six-sides}, in the sense
that the order of endpoints and intersection points along each line
$\ell_i$ is fixed.
We will ignore $a_0$ from now on.

%
          

\subsection{Finalizing the Analysis}
\label{sec:final-cases}

Recall that every $a_i$ is the primary side of two consecutive sides $a_i,b_i$
of $Q$
that are hooking 
with respect to an associated pair $A_i,B_i$ of consecutive sides of $P$.
The sides $a_i$ and $A_i$ are the primary sides and $b_i$~and $B_i$
are the companion sides.
All these $4\times 6
$ sides are distinct, and they intersect as follows:
$a_i$ intersects $B_i$ and is disjoint from $A_i$; $b_i$
intersects $A_i$ and is disjoint from $B_i$; and $I(A_i,B_i) \in
\Cone(a_i,b_i)$.

Figure~\ref{fig:subgraph-of-NI} summarizes the intersection pattern
among these sides.
A side $A_i$ must intersect every
side $a_j$ with $j \ne i$
and every side $b_j$
since
$\CC(A_i) = \CC(a_i) \ne \CC(a_j)$ and
$\CC(A_i) = \CC(a_i) \ne \CC(b_i)=\CC(b_j)$.
(Recall that all companion sides $b_i$ belong to the same component.)
Similarly, every side $B_i$ must intersect every side $a_j$.
We have no information about the intersection between $B_i$
and $b_j$, as these sides belong to the same connected
component.

\begin{figure}[htb]
  \centering
\includegraphics[scale=1.1]{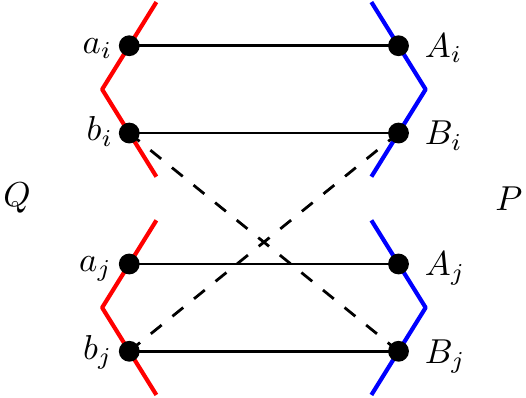}
\caption{The subgraph of $G\NI$ induced on two pairs of consecutive 
  sides $a_i,b_i$ and $a_j,b_j$ of $P$ and their associated partner pairs
  $A_i,B_i$ and $A_j,B_j$ of $Q$. Parts of $P$ and $Q$ are shown to
  indicate consecutive sides.
  The dashed edges may or may not be present.}
  \label{fig:subgraph-of-NI}
\end{figure}


We will now derive a contradiction through a series of case distinctions.

\subparagraph{Case 1:}
There are three segments $A_i$ with the property that
$A_i$ crosses $\ell_i$ to the left of~$a_i$.
Without loss of generality, assume that
 these segments are $A_1,A_2,A_3$, 
see Figure~\ref{fig:3_left}.
The segments $A_1,A_2,A_3$ must not cross because $P$ is a simple
polygon.
Therefore  $A_1$ intersects $a_2$ to the right of $I(a_1,a_2)$ because
otherwise $A_1$ would cross $A_2$ on the way between its intersections
with $\ell_2$ and with $a_1$.
$A_3$ must cross $\ell_3, a_2, a_1$ in this order, as shown.
But then $A_1$ and $A_3$ (and $a_2$) block $A_2$ from intersecting $a_3$.
\begin{figure}
	\centering
	\includegraphics[scale=1]{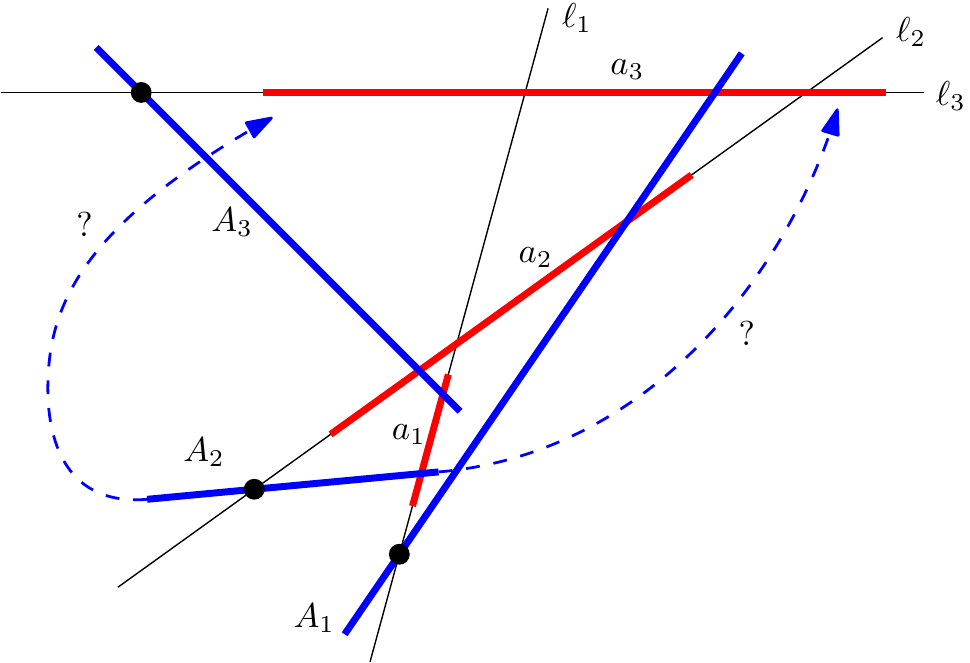}
	\caption{The assumed intersection points between $A_i$ and
          $\ell_i$ are marked.}
	\label{fig:3_left}
\end{figure} 

\goodbreak

\subparagraph{Case 2:}
There at most two segments $A_i$ with the property that
$A_i$ crosses $\ell_i$ to the left of~$a_i$.
In this case, we simply discard these segments.
We select four of the
remaining
segments
and renumber them from 1 to 4.

From now on, we can make the following assumption:
\begin{quote}
\textbf{General Assumption:}
For every $1 \le i \le 4$, the segment
$A_i$ does not cross $\ell_i$ at all, or it crosses
$\ell_i$ to the right of~$a_i$.
\end{quote}

This implies that $A_3$ must intersect the sides $a_2,a_1,a_4$ in this order,
and it is determined in which cell of the arrangement of the lines
$\ell_1,\ell_2,\ell_3,\ell_4$ the left endpoint of $A_3$ lies
(see Figures~\ref{fig:six-sides} and~\ref{fig:A3-below}). For the right endpoint, we have a choice
of two cells, depending on whether $A_3$ intersects $\ell_3$ or not.

We denote by $\mathrm{left}(s)$ and $\mathrm{right}(s)$ the left and
right endpoints of a segment $s$. 
We distinguish four cases, based on
whether
the common endpoint
of 
$A_3$
and
$B_3$ lies at
 $\mathrm{left}(A_3)$ or $\mathrm{right}(A_3)$,
and
whether
the common endpoint
of 
$a_3$
and
$b_3$ lies at
 $\mathrm{left}(a_3)$ or $\mathrm{right}(a_3)$.

\subparagraph{Case 2.1:}
$I(A_3,B_3) = \mathrm{left}(A_3)$ and
$I(a_3,b_3) = \mathrm{right}(a_3)$,
see Figure~\ref{fig:A3-below}.

As indicated in the figure, we leave it open whether and where $A_3$
intersects $\ell_3$.
We know that $b_3$ must lie below $\ell_3$ because
$I(A_3,B_3) \in\Cone(a_3,b_3)$.

\begin{figure}[htb]
	\centering
	\includegraphics[scale=1.1]{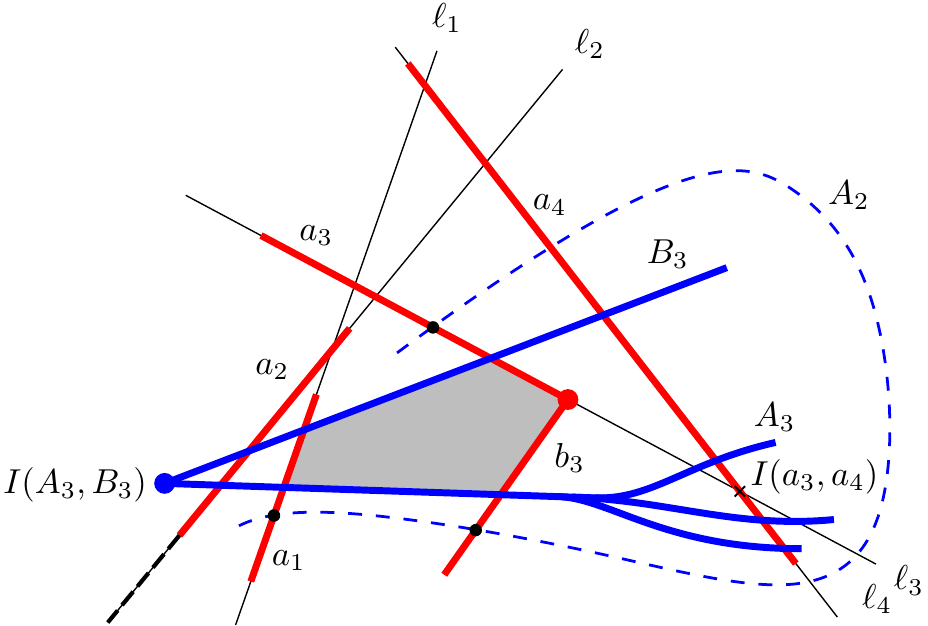}
	\caption{Case 2.1,
$I(A_3,B_3) = \mathrm{left}(A_3)$ and
$I(a_3,b_3) = \mathrm{right}(a_3)$.          
          A hypothetical segment $A_2$ is shown as a dashed curve.
The side $a_2$ and
        the part of $\ell_2$ to the left of $a_2$ is blocked for $A_2$.}
	\label{fig:A3-below}
\end{figure} 

We claim that $A_2$ cannot have the required intersections with $a_1$,
$a_3$, and $b_3$.
Let us first consider $a_1$: It is cut into three pieces by $A_3$ and
$B_3$.

If $A_2$ intersects the middle piece of $a_1$
in the wedge between $A_3$ and $B_3$, then $A_2$ intersects
exactly one of $a_3$ and $b_3$ inside the wedge, because these parts
together with $a_1$ are three sides of a convex pentagon.
If $A_2$ intersects $a_3$, then it has crossed $\ell_3$ and it cannot
cross $b_3$ thereafter.
If $A_2$ intersects $b_3$, it must cross $\ell_4$ before leaving the
wedge,
and then it cannot cross $a_3$ thereafter.

Suppose now that $A_2$ crosses the bottom piece of $a_1$. Then it cannot go
around $A_3,B_3$ to the right in order to reach $a_3$ because it would have
to intersect $\ell_4$ twice. $A_2$ also cannot pass to the left of
$A_3,B_3$ because
it cannot cross $\ell_2$ through $a_2$ or,
by the general assumption,  to the left of~$a_2$.

Suppose finally that $A_2$ crosses the top piece of $a_1$. Then it
would have to
cross $\ell_3$ twice before reaching $b_3$.

\goodbreak

\subparagraph{Case 2.2:}
$I(A_3,B_3) = \mathrm{left}(A_3)$ and $I(a_3,b_3) = \mathrm{left}(a_3)$.

If $\ell(A_3)$ does not intersect $a_3$,
we derive a contradiction as follows,
see Figure~\ref{fig:left-left-easy}.
We know that
the sides $a_2, a_3, a_4$ must be arranged as shown. 
The segment $A_3$ crosses $a_2$ 
but not~$a_3$.
Now, the parts of $a_3$ and $A_3$ to the left of $\ell_2$
form two opposite sides of a quadrilateral, as shown in the figure.
If this quadrilateral were not convex,
then either
$\ell(A_3)$ would intersect $a_3$, which we have excluded by assumption,
or $\ell_3$ would intersect $A_3$ left of $a_3$,
contradicting 
the General Assumption.
Thus, the sides
 $a_3$ and $A_3$ violate
the Axis Property (Observation~\ref{lem:axis}),
which requires
 $a_3$ and $A_3$ to lie on different sides of the line
through
$I(A_3,B_3)
$ and
$I(a_3,b_3)
$.

Looking back at this proof, we have seen that the configuration of the
segments $a_1,a_2, a_3, a_4$ according to Figure~\ref{fig:six-sides} in
connection with the particular case assumptions make the situation
sufficiently constrained that the case can be dismissed by looking at
the drawing.  The treatment of the other cases will be proofs by
picture in a similar way, but we will not always spell out the
arguments in such detail.

\begin{figure}[htb]
  \begin{minipage}[t]{0.47\linewidth}
    \centering
    \includegraphics[page=2]{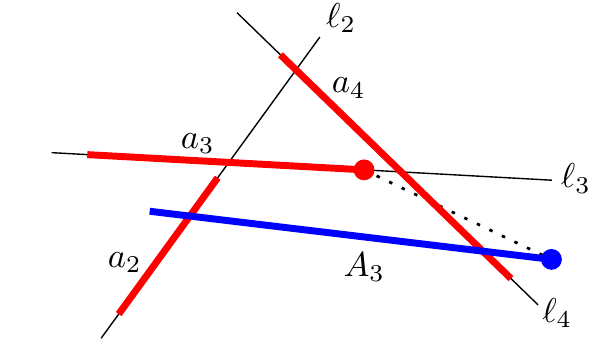}
    \caption{Case 2.2. 
      $I(A_3,B_3) = \mathrm{left}(A_3)$, 
      $I(a_3,b_3) = \mathrm{left}(a_3)$,
      $\ell(A_3)$ does not intersect~$a_3$.}
    \label{fig:left-left-easy}
  \end{minipage}
\qquad
  \begin{minipage}[t]{0.49\linewidth}
    \centering
    \includegraphics{right-right}
    \caption{Case 2.3. $I(A_3,B_3) = \mathrm{right}(A_3)$,
      and
      $I(a_3,b_3) = \mathrm{right}(a_3)$,
      $A_3$ lies below $\ell_3$.}
    \label{fig:right-right}
  \end{minipage}
\end{figure}

\begin{figure}[htb]
	\centering
	\includegraphics[scale=1.05]{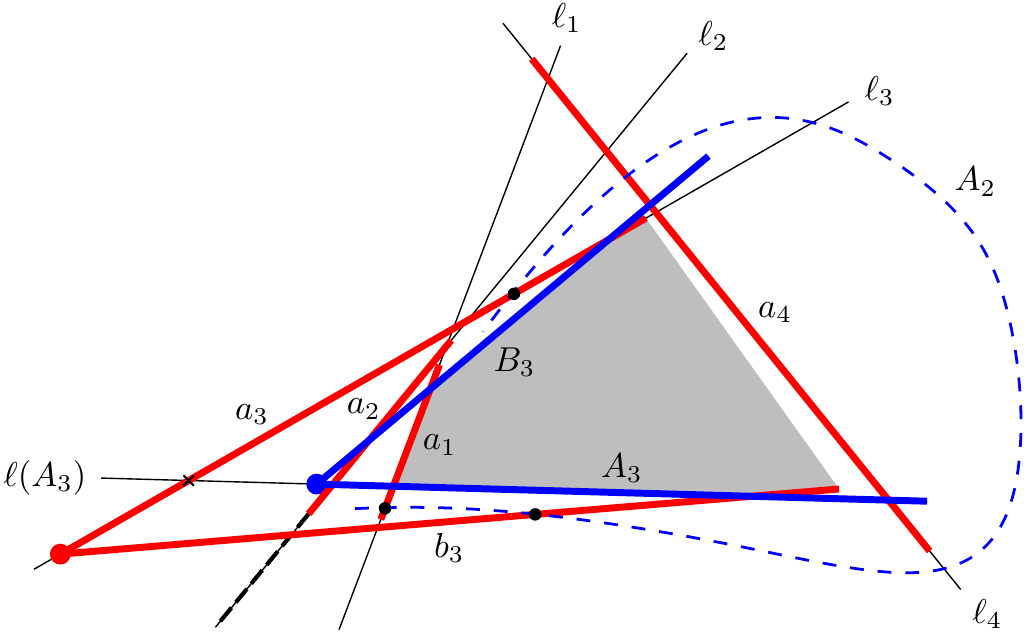}
	\caption{Case 2.2, $I(A_3,B_3) = \mathrm{left}(A_3)$,
          $I(a_3,b_3) = \mathrm{left}(a_3)$,
and $\ell(A_3)$ intersects $A_3$.
        A~hypothetical segment $A_2$ is shown as a dashed curve.}
	\label{fig:A3-left}
\end{figure} 

If $\ell(A_3)$ intersects $a_3$,
the situation must be as shown
in Figure~\ref{fig:A3-left}:
the pair $A_3,B_3$ is hooked
by 
$a_3$ and $b_3$.
The analysis of Case 2.1 
(Figure~\ref{fig:A3-below}) applies verbatim, except that the word
``pentagon'' must be replaced by ``hexagon''.

\subparagraph{Case 2.3:}
$I(A_3,B_3) = \mathrm{right}(A_3)$,
and
$I(a_3,b_3) = \mathrm{right}(a_3)$.

If  $A_3$ lies entirely below $\ell_3$, then
$A_3$ together with $a_3$ violates
the Axis Property (Observation~\ref{lem:axis}),
see Figure~\ref{fig:right-right}.

\begin{figure}[htb]
	\centering
	\includegraphics{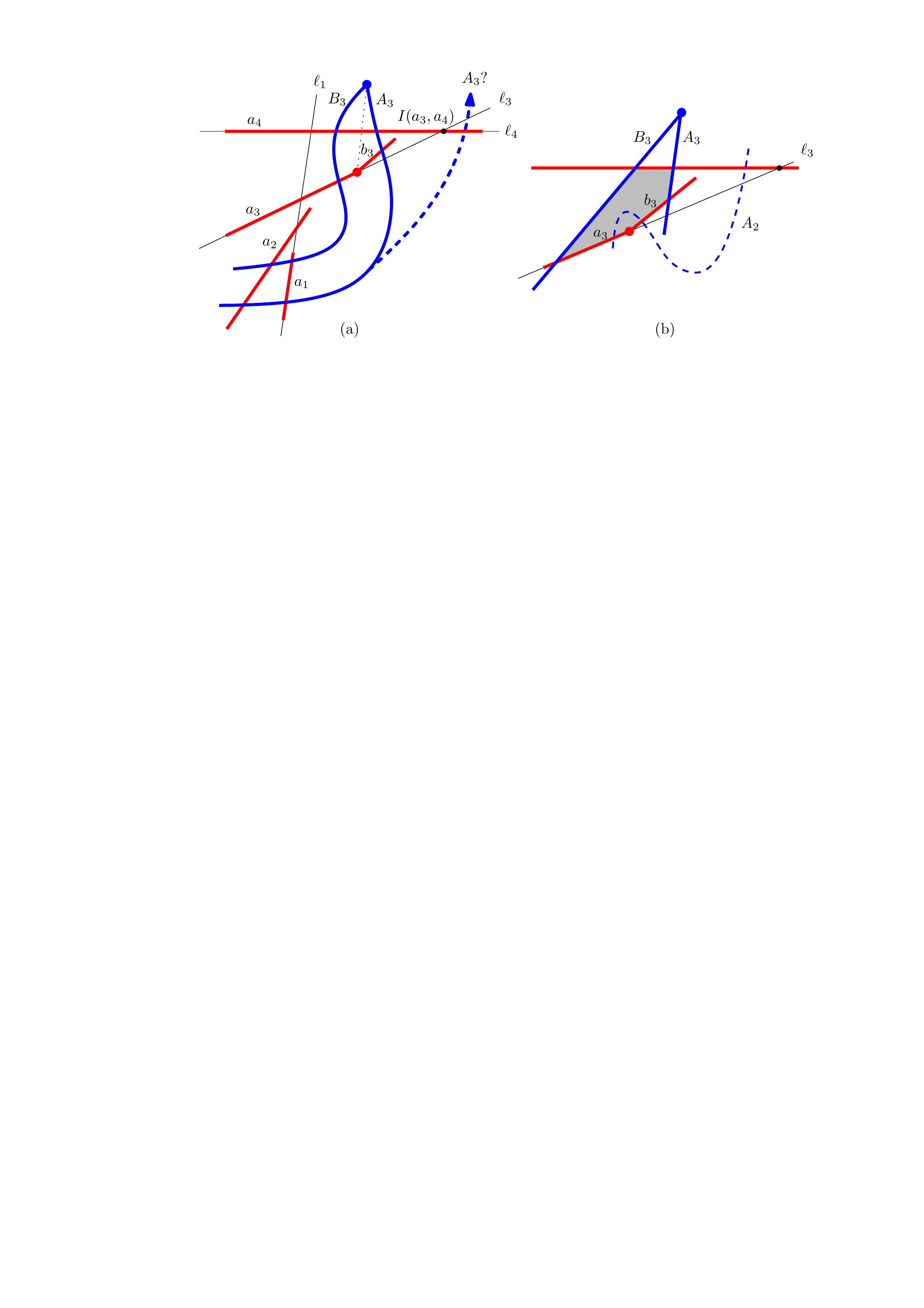}
	\caption{Case 2.3. $A_3$ intersects $\ell_3$.}
	\label{fig:cap-case-2_1_1}
\end{figure}

Let us therefore assume
that $A_3$ intersects $\ell_3$ (to the right of $a_3$),
and thus $\mathrm{right}(A_3)=I(A_3,B_3)$ lies above $\ell_3$,
see Figure~\ref{fig:cap-case-2_1_1}a.
Then $b_3$ must also lie above $\ell_3$, because $a_3,b_3$ is supposed
to be hooking, that is,
$I(A_3,B_3)\in \Cone(a_3,b_3)$.

It follows that $A_3$ cannot intersect $\ell_3$ to the right of
$I(a_3,a_4)$ (the option shown as a dashed curve),
because otherwise it would miss $b_3$: $b_3$ is blocked by $a_4$.

Therefore, the situation looks as shown in
Figure~\ref{fig:cap-case-2_1_1}a.
Figure~\ref{fig:cap-case-2_1_1}b shows the position of the relevant pieces.
The segments $a_4,B_3,a_3,b_3,A_3$ enclose a convex pentagon.
Now, the segment $A_2$ should intersect $a_3$, $b_3$, and $a_4$
without crossing $A_3$ and $B_3$, like the dashed curve
in the figure.
This is impossible. 

%



%

\subparagraph{Case 2.4:}
$I(A_3,B_3) = \mathrm{right}(A_3)$ and $I(a_3,b_3) =
\mathrm{left}(a_3)$.


If $A_3$ intersects $\ell_3$ (to the right of $a_3$), then
$A_3$ together with $a_3$ violates
the Axis Property (Observation~\ref{lem:axis}),
see Figure~\ref{fig:2_4-easy}.
We thus 
assume that $A_3$ lies entirely below~$\ell_3$.

\begin{figure}[htb]
	\centering
	\includegraphics{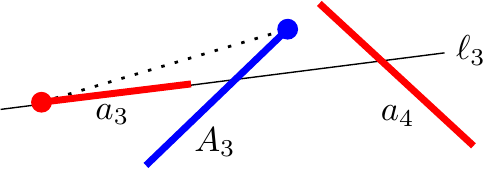}
	\caption{Case 2.4. $A_3$ intersects $\ell_3$.}
	\label{fig:2_4-easy}
\end{figure} 

\begin{figure}[htb]
	\centering
	\includegraphics[scale=0.98]{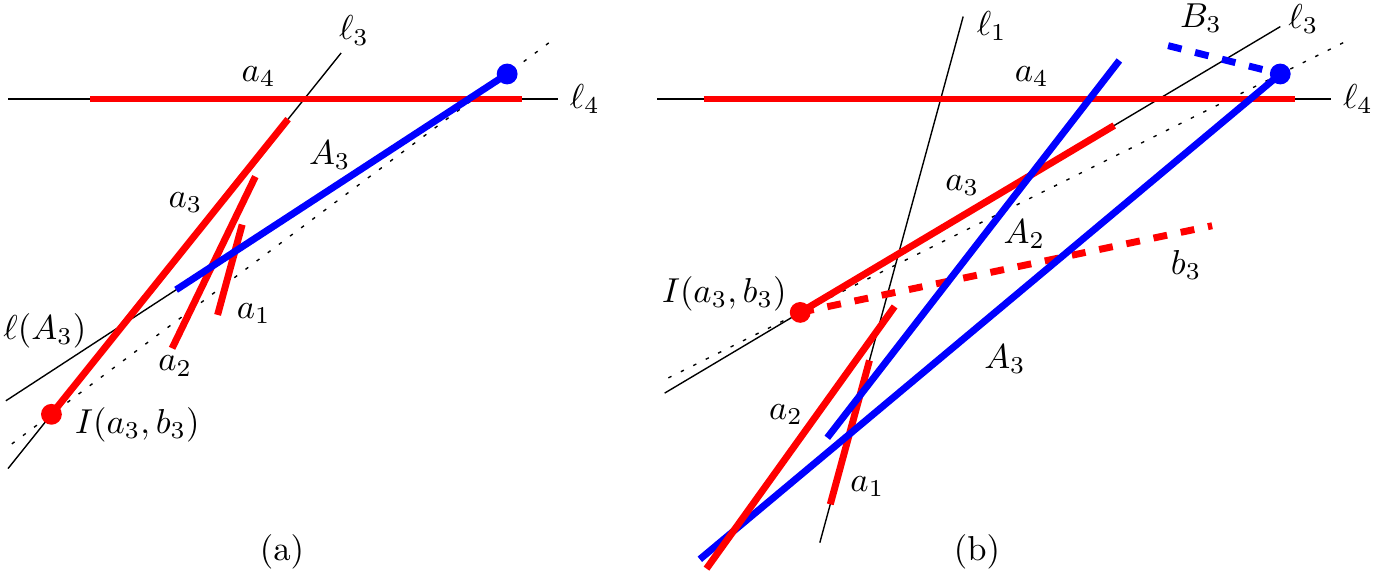}
	\caption{Case 2.4.  $A_3$ lies below $\ell_3$.}
	\label{fig:cap-case-2_4}
\end{figure} 

If  $\ell(A_3)$ passes above $I(a_3,b_3)=\mathrm{left}(a_3)$,
the sides $a_3$ and $A_3$ violate the Axis Property see
Figure~\ref{fig:cap-case-2_4}a.
On the other hand,
if  $\ell(A_3)$ passes below $I(a_3,b_3)=\mathrm{left}(a_3)$,
as shown in 
Figure~\ref{fig:cap-case-2_4}b,
then
 $b_3$
 must cross $\ell_1$ to the right of~$a_1$ in order to reach $A_2$.
Again by the Axis Property, $B_3$ must remain 
above the dotted axis line through
$I(A_3,B_3)=\mathrm{right}(A_3)$ and
$I(a_3,b_3)=\mathrm{left}(a_3)$.
On $\ell_1$, $b_3$ separates $a_1$ from the axis line, and hence
$a_1$ lies below the axis line.
Therefore $B_3$ and $a_1$ cannot
intersect.
%
%
%
%
%

This concludes the proof of Theorem~\ref{thm:main}. \qed
\subparagraph{Acknowledgement.} We thank the reviewers for helpful suggestions.

\bibliographystyle{abbrv}

\end{document}